\theoremstyle{plain}
\newtheorem{teo}{Theorem}
\newtheorem{prop}[teo]{Proposition}
\theoremstyle{definition}
\newtheorem{ese}[teo]{Example}
\theoremstyle{remark}
\newtheorem{oss}[teo]{Remark}
\newcommand{\rp}[1]{\ensuremath{\mathbb{RP}^{#1}}}
\newcommand{\s}[1]{\ensuremath{\mathbf{S}^{#1}}}
\newcommand{\R}[1]{\ensuremath{\mathbb{R}^{#1}}}
\begin{document}

\title{On the KBSM of links in lens spaces}


\author{Bo\v{s}tjan Gabrov\v{s}ek, Enrico Manfredi}

\maketitle

\begin{abstract}

In this paper the properties of the Kauffman bracket skein module of $L(p,q)$ are investigated. 
Links in lens spaces are represented both through band and disk diagrams. The possibility to transform between the diagrams enables us to compute the Kauffman bracket skein module on an interesting class of examples consisting of inequivalent links with equivalent lifts in the $3$-sphere. The computation show that the Kauffman bracket skein module is an essential invariant, that is, it may take different values on links with equivalent lifts. We also show how the invariant is related to the Kauffman bracket of the lift in the $3$-sphere.

\noindent
 {{\it Mathematics Subject
Classification 2010:} Primary 57M25, 57M27; Secondary 57M10.\\
{\it Keywords:} knots/links, lens spaces, lift, skein module, Kauffman bracket.}\\

\end{abstract}

\begin{section}{Introduction}

Skein modules play an important role in geometric topology since they capture essential information about the geometry of 3-manifolds. For instance, they reflect the interaction between embedded 1-dimensional and embedded 2-dimensional submanifolds, in particular, the existence of an embedded non-separating surface produces torsion in the module \cite{P}.

Our focus will be the Kauffman bracket skein module (KBSM) of links in lens spaces $L(p,q)$. This module is of particul interest since it is a non-trivially finitely generated skein module. The motivation for studying links in lens spaces can also be justified in recent applications to biology \cite{BM} and physics \cite{Ste}.

As for the paper itself, it represents a merge of the theses of the first author \cite{Gb} and second author \cite{Mn}.

When studying knots in lens spaces, we can immediately notice that several different knot representations are scattered throughout literature. For example, band diagrams were originally used to calculate the KBSM of $L(p,q)$ \cite{HP}. Closely related punctured disk diagrams were used to tabulate knots in $L(p,q)$ \cite{Gb}. It was recently shown in \cite{CMM} that it is possible to generalize Drobotukhina's disk diagram for the projective space to the case of $L(p,q)$. 
Disk diagrams, which follow the algebraic description of $L(p,q)$ more closely, allow us to construct a straightforward Wirtinger-type presentation of the link group \cite{CMM} and allow us to study lifts of knots in $L(p,q)$ to the cyclic cover $S^3$ \cite{Mn2}.

In order to better understand different representations of knots in $L(p,q)$, we explicitly show in Section 2 how to transform one diagram to the other.

In section 3 we show how to construct the lift of a link in $L(p,q)$ to a link in $S^3$ using band diagrams. In addition, we show that the KBSM is an essential invariant, that is to say, the KBSM may have different values on links with equivalent lifts. This, on the one hand, provides another evidence that the links considered in \cite{Mn2}, with equivalent lift, are actually distinct, and on the other hand, it shows that the KBSM is a genuine $L(p,q)$ link invariant that cannot be simply derived from an $S^3$ link invariant of the lift.

At last, in Section 4, following an idea of Chbili \cite{C2} about the Jones polynomial of freely periodic knots, we show the connection between the KBSM of a link in a lens space and the KBSM of its lift in $\s3$

\end{section}


\section{Different representations of links in lens spaces}\label{bandsection}

The aim of this section is to recall the possible representations of links in lens spaces, namely, the disk diagram of \cite{CMM}, the band diagram of \cite{HP}, and the closely related punctured disk diagram of \cite{Gb}. We show how to switch from one representation to the other.

\paragraph{Basic definitions}
Let $L(p,q)$ denote the lens space obtained by a $p/q$ rational surgery on the unknot in the $3$-sphere $\s3$. Here $p$ and $q$ are coprime integers satisfying $0 \leq q < p$.

A $n$-component link $L$ in a $L(p,q)$ is the embedding of $n$ copies of $\s1$ to $L(p,q)$.
A link $L \subset L(p,q)$ is \emph{trivial} if its components bound $n$ pairwise disjoint $2$-disks in $L(p,q)$.
Two links $L,L'\subset L(p,q)$ are called \emph{equivalent} if there exists a smooth ambient isotopy between them.



\paragraph{Band diagrams and punctured disk diagrams}
Let $L$ be a link in $L(p,q)$ described by $p/q$-rational surgery over the unknot $U$ as before. 
Such a link can be presented as a mixed link $L \cup U \subset \s3$ \cite{DL}. Let $x$ be a point of $U$ and send it to $\infty$ in order to describe $\s3$ with the one-point compactification of $\mathbb{R}^{3}$. Let $(x_1, x_2, x_3)$ be the coordinates of $\R3$. Assume that $U$ is described by the $x_3$ axis. Consider the orthogonal projection $\mathbf{p}$ of $L\cup U$ on the $x_1x_2$ plane. Up to small isotopies of $L$, we can assume that this projection is \emph{regular}, that is:
\begin{enumerate}\itemsep-3pt
\item[1)] the projection of $L$ contains no cusps;
\item[2)] all auto-intersections of $\mathbf{p}(L)$ are transversal;
\item[3)] the set of multiple points is a finite set of double points.
\end{enumerate}

The \emph{punctured disk diagram} of $L \subset L(p,q)$ is a regular projection of $L\cup U$, where $U$ is projected to a "dot" (see Figure~\ref{pddbdiagram}).
If we just consider the space $\s3 \setminus U$ without the Dehn filling, a dotted diagram can also describe a link in the solid torus \cite{GM1}.

A \emph{band diagram} for a link $L \subset L(p,q)$ is obtained from a punctured disk diagram through the construction depicted in Figure \ref{pddbdiagram}: cut the punctured disk diagram along a line orthogonal to the boundary and avoiding the crossings of $L$, then deform the annulus into a rectangle.
This operation can easily be reversed.

\begin{figure}[h!]                      
\begin{center}                         
\includegraphics[width=12cm]{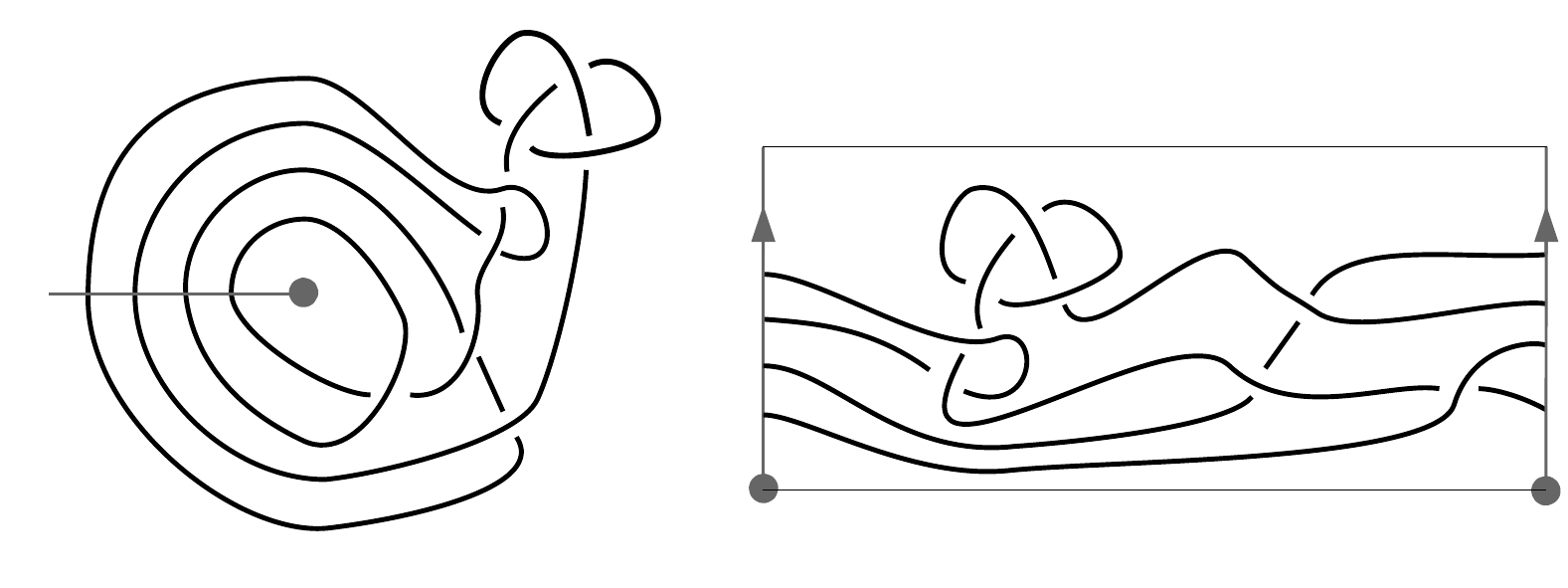}
\caption[legenda elenco figure]{An example of punctured disk diagram and band diagram for a link in $L(p,q)$.}\label{pddbdiagram}
\end{center}
\end{figure}


\paragraph{Reidemeister moves for punctured disk diagrams}
Punctured disk diagrams for $L(p,q)$ are accompanied by the three classical Reidemeister moves and an additional slide move $SL$ that arises from the $p/q$-surgery (see \cite{HP, Gb} for details).



\begin{figure}[h!]                      
\begin{center}                         
\includegraphics[width=10cm]{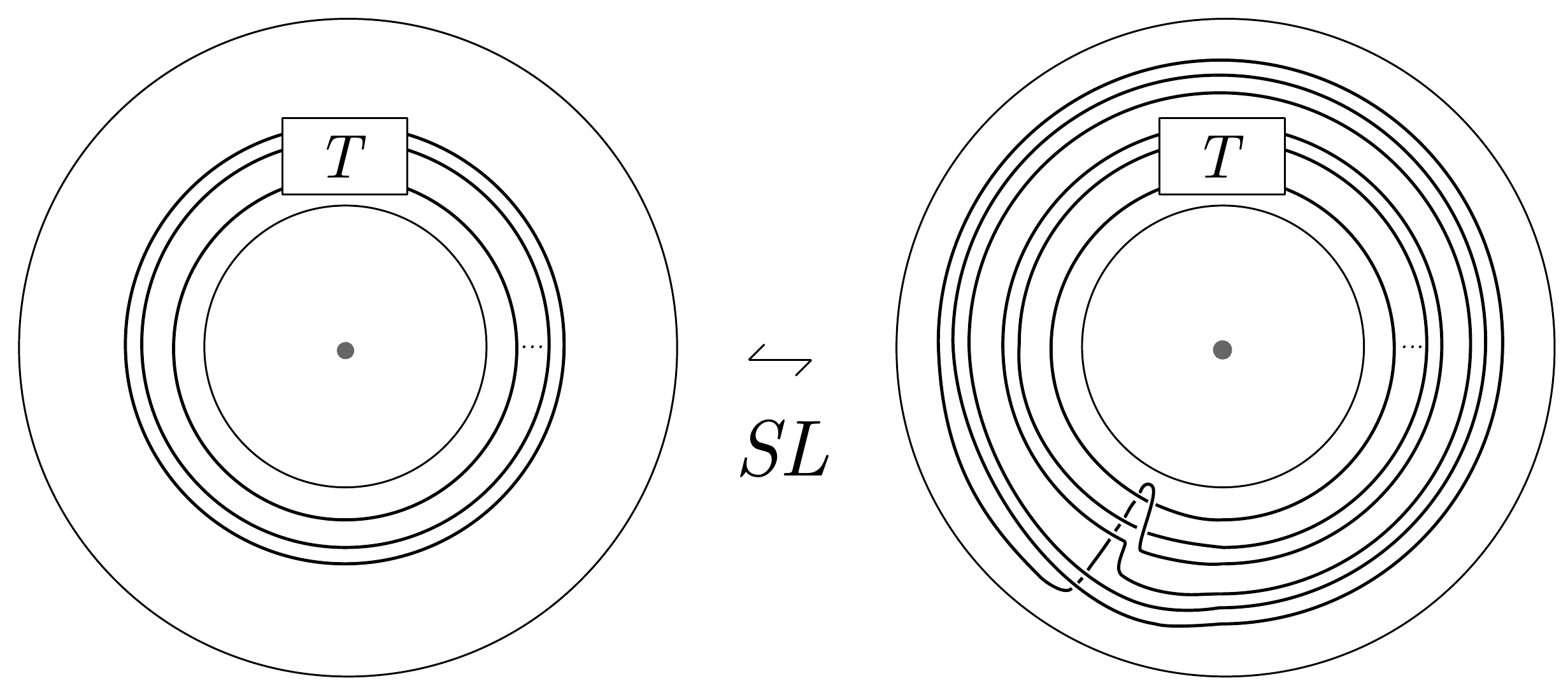}
\caption[legenda elenco figure]{$SL$ move on punctured disk diagram for the case $L(3,1)$.}\label{SL}
\end{center}
\end{figure}

\paragraph{The lens model for lens spaces}


A lens space may be defined also by the following model: considering the $3$-dimensional ball $B^3$ and let $E_{+}$ and $E_{-}$ be respectively the upper and the lower closed hemisphere of $\partial B^{3}$. The equatorial disk $B^{2}_{0}$ is defined as the intersection of the plane $x_{3}=0$ with $B^{3}$. Label with $N$ and $S$ respectively the north pole $(0,0,1)$ and the south pole $(0,0,-1)$ of $B^{3}$.
Let \mbox{$g_{p,q} \colon E_{+} \rightarrow E_{+}$} be the counterclockwise rotation of $2 \pi q /p$ radians around the $x_{3}$-axis and let \hbox{$f_{3} \colon E_{+} \rightarrow E_{-}$} be the reflection with respect to the plane $x_{3}=0$ (Figure~\ref{L(p,q)}).

\begin{figure}[h!]                      
\begin{center}                         
\includegraphics[width=8cm]{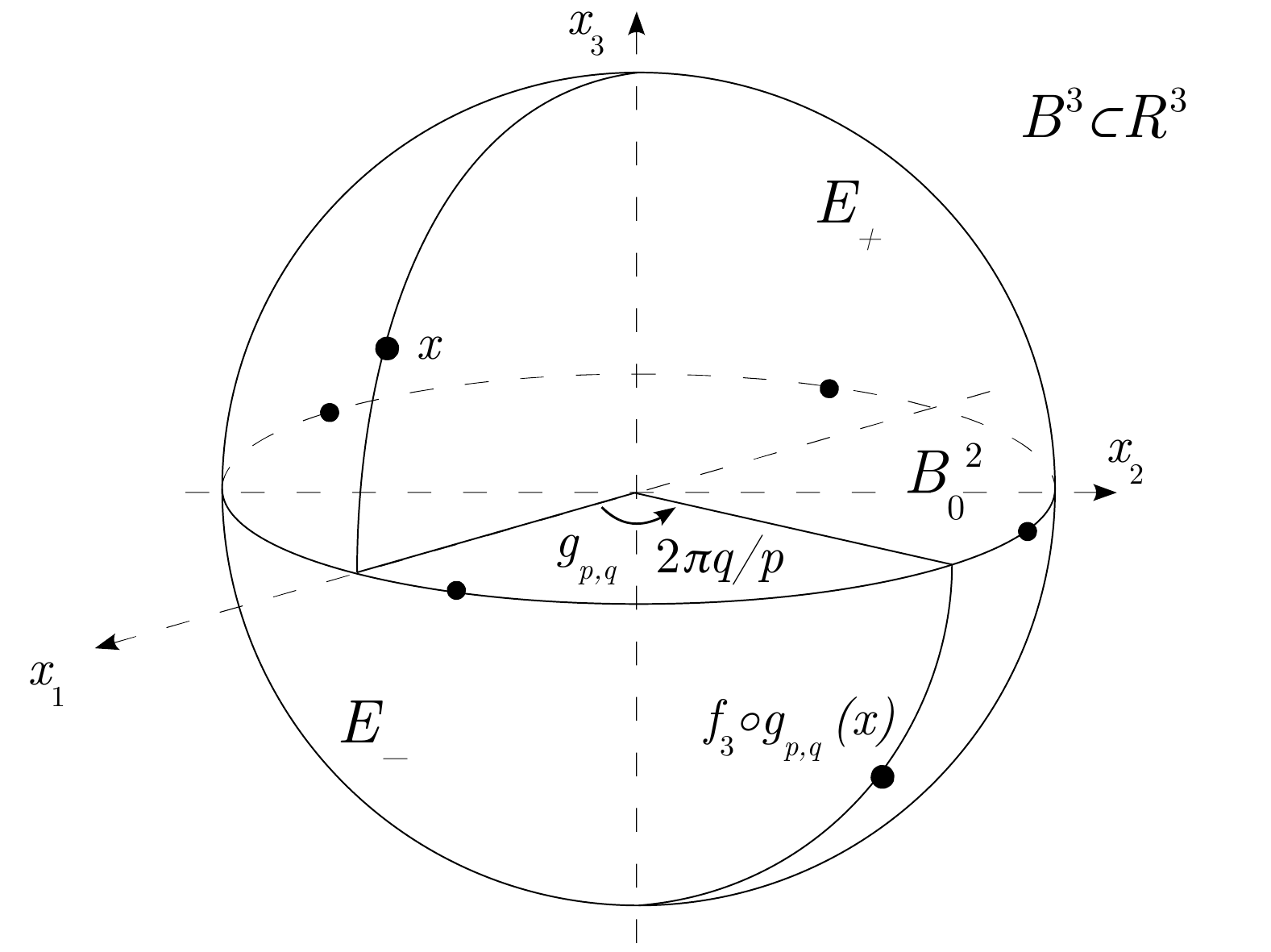}
\caption[legenda elenco figure]{Representation of $L(p,q)$.}\label{L(p,q)}
\end{center}
\end{figure}

The \emph{lens space} $L(p,q)$ is the quotient of $B^{3}$ by the equivalence relation on $\partial B^{3}$ which identifies $x \in E_{+}$ with $f_{3} \circ g_{p,q} (x) \in E_{-}$. The quotient map is denoted by \mbox{$F \colon B^{3} \rightarrow L(p,q)=B^{3} / \sim$}. Note that on the equator \mbox{$\partial B^{2}_{0}=E_{+} \cap E_{-}$} each equivalence class contains $p$ points, instead of the two points contained in the equivalence classes outside the equator. The first example is $L(1,0)\cong \s{3}$ and the second example is $L(2,1) \cong \rp{3}$, where the construction gives the usual model of the projective space with opposite points on $\partial B^3$ identified.

\paragraph{The disk diagram}
Since we are not interested in the case of $\s3$, we assume $p>1$. Intuitively, a \emph{disk diagrams} of a link $L$ in $L(p,q)$, represented by the lens model, is a regular projection of $F^{-1}(L)$ onto the equatorial disk of $B^{3}$, with the resolution of double points with overpasses and underpasses. 
In order to have a comprehensible diagram, we label with $+1, \ldots, +t$ the endpoints of the projection of the link coming from the upper hemisphere, and with $-1, \ldots, -t$ the endpoints coming from the lower hemisphere, respecting the rule $+i \sim -i$ (see \cite{CMM} for more details). 
An example is shown in Figure~\ref{link3}.

\begin{figure}[h!]                      
\begin{center}                         
\includegraphics[width=9cm]{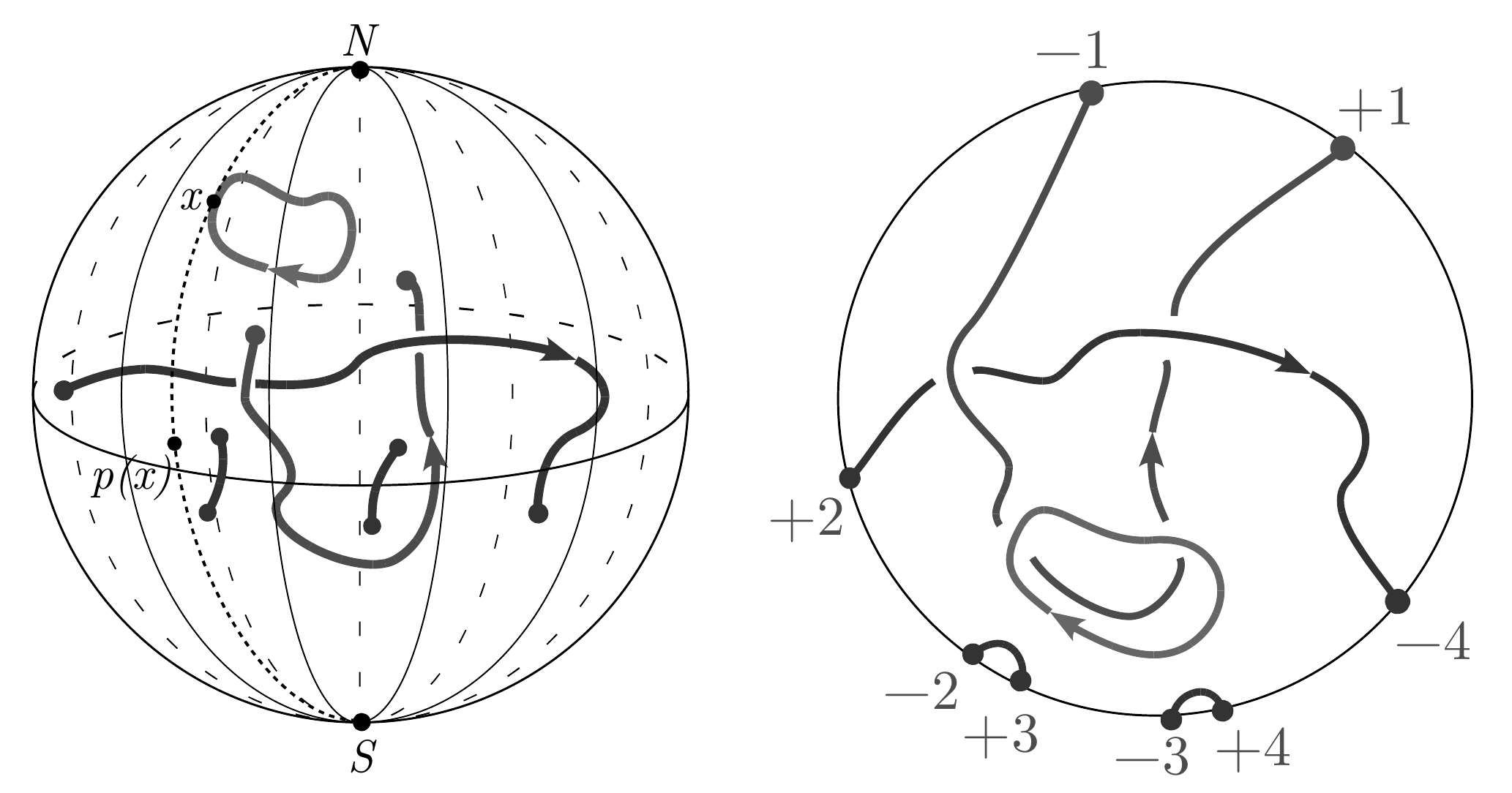}
\caption[legenda elenco figure]{A link in $L(9,1)$ and its corresponding disk diagram.}\label{link3}
\end{center}
\end{figure}

\paragraph{Reidemeister moves for disk diagrams}
Two disk diagrams of links in lens space represent equivalent links if and only if they are connected by a finite sequence of the Reidemeister moves illustrated in Figure \ref{R17} \cite{CMM}.

\begin{figure}[h!]                      
\begin{center}                         
\includegraphics[width=12cm]{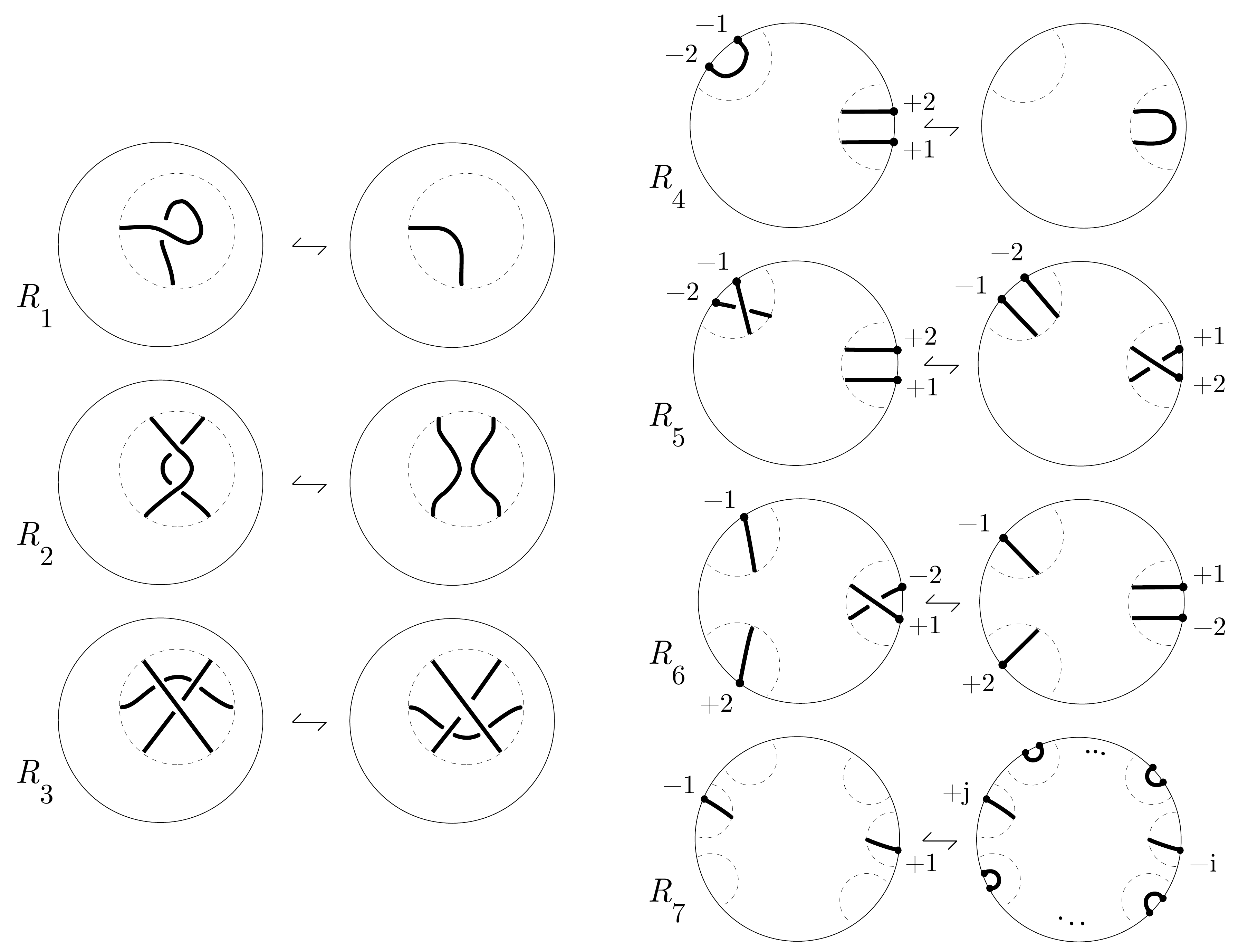}
\caption[legenda elenco figure]{Generalized Reidemeister moves.}\label{R17}
\end{center}
\end{figure}

\paragraph{Standard form of the disk diagram}
A disk diagram is defined \emph{standard} if the labels on its boundary points, read according to the orientation on $\partial B^{2}_{0}$, are $+1, \ldots, +t, -1, \ldots, -t$. 

\begin{prop}{\upshape \cite{Mn2}}\label{standard}
Every disk diagram can be reduced to a standard disk diagram using small isotopies:
if $p=2$, the signs of its boundary points can be exchanged;
if $p>2$, a finite sequence of $R_{6}$ moves can be applied in order to bring all positive boundary points aside.
\end{prop}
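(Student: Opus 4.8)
The plan is to reduce the statement to a combinatorial fact about the cyclic arrangement of the $2t$ boundary points of the diagram, and then to realise the necessary rearrangements by small isotopies together with the moves named in the statement. First I would record the structural constraint coming from the lens model: the point labelled $-i$ is by definition the image of the one labelled $+i$ under $f_{3}\circ g_{p,q}$, and since $f_{3}$ restricts to the identity on the equator $\partial B^{2}_{0}$ while $g_{p,q}$ restricts there to the rotation by $2\pi q/p$, the position of $-i$ along $\partial B^{2}_{0}$ is always the $2\pi q/p$-rotate of the position of $+i$ (the vertical projection carries the $f_{3}\circ g_{p,q}$-action on $E_{+}$ to this rotation, so the relation also survives the passage from the endpoints on $E_{\pm}$ to the boundary points of the diagram, as well as any isotopy). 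As this rotation preserves the orientation of $\partial B^{2}_{0}$, the cyclic order of the negative points equals, at all times, the rotated image of the cyclic order of the positive points. Hence it suffices to bring the positive points into consecutive position: once they occupy an arbitrarily short arc (contracted there by a small isotopy) and are relabelled $+1,\dots,+t$ along the orientation, their partners $-1,\dots,-t$ are squeezed, in the same cyclic order, into the rotated short arc, which is disjoint from the first because $q\not\equiv0\pmod p$; reading off the labels then gives precisely $+1,\dots,+t,-1,\dots,-t$.

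So the whole problem is to make the positive points consecutive, i.e.\ to turn the cyclic word over $\{+,-\}$ obtained by erasing indices into $+^{t}-^{t}$. For $p>2$ I would accomplish this with the move $R_{6}$, which as drawn in Figure~\ref{R17} transposes an adjacent pair of boundary points of opposite sign: a bubble sort of the cyclic $\pm$-word by repeated $R_{6}$ moves collects all the $+$'s together after finitely many steps, and the first part concludes. (Same-sign points never need to be reordered, since the indices $1,\dots,t$ are ours to assign, subject only to the pairing $+i\sim-i$.)

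For $p=2$ one has $q=1$, so $f_{3}\circ g_{2,1}$ is the antipodal map of $\partial B^{3}$: here $-i$ sits antipodally to $+i$ on $\partial B^{2}_{0}$ and $F(\partial B^{3})\cong\rp{2}$ is an honestly embedded surface. A point where $L$ meets this surface can then be slid across the image of the equator by a small isotopy, which on labels is exactly the permitted exchange of signs --- a point labelled $+i$ becomes one labelled $-i$, while its antipodal partner's label changes from $-i$ to $+i$. Declaring, for each antipodal pair, the representative that lies in a fixed closed half of $\partial B^{2}_{0}$ to be the positive one puts all positive points in that half; ordering them $+1,\dots,+t$ along the orientation then forces, by antipodality, the order $-1,\dots,-t$ on the negatives, which is the standard form.

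I expect the only substantive work to be the geometric realisation of the two elementary moves --- verifying that an opposite-sign transposition of boundary points is realised diagrammatically by a sequence of $R_{6}$ moves when $p>2$, and by a sign exchange when $p=2$, which is precisely where the smoothness of $F(\partial B^{3})$ (valid exactly for $p=2$) is used --- together with the bookkeeping that the $2\pi q/p$-rotation linking the positive and negative configurations is preserved under all of these, so that standardising the positives automatically standardises the negatives. Everything else reduces to the finite sorting argument sketched above.
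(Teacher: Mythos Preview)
The paper does not actually supply a proof of this proposition: it is quoted verbatim from \cite{Mn2}, and the statement itself already names the two operations (sign exchange for $p=2$, $R_{6}$ moves for $p>2$) that constitute the argument. Your write-up is a correct fleshing-out of exactly that strategy---using the rigid $2\pi q/p$ angular offset between $+i$ and $-i$ imposed by $f_{3}\circ g_{p,q}$ to reduce the problem to gathering the positive points, and then sorting with $R_{6}$ (respectively, sign swaps across the equator when $p=2$)---so there is nothing to contrast: you have reconstructed the intended proof.

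One small caution worth making explicit in the $p>2$ case: because the offset is rigid, sliding $+i$ past an adjacent $-j$ moves $-i$ by the same angle, so a single application of $R_{6}$ may show up at two places on the boundary circle rather than one; your bubble-sort bookkeeping should track the induced motion of the partner point. This does not affect the conclusion---the invariant you isolate (the $+$'s and $-$'s are rotates of one another) is preserved throughout, and collecting the $+$'s into a short arc forces the $-$'s into the disjoint rotated arc---but it is the one place where the ``$R_{6}$ transposes an adjacent opposite-sign pair'' description is slightly schematic.
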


\paragraph{Equivalence between link diagrams}

At this point we describe a geometric transformation between disk and punctured disk diagrams.
Note that the transformation between punctured disk diagrams and band diagrams has already been described in the previous paragraphs.

Let $B_{t}$ be the braid group on $t$ letters and let $\sigma_{1}, \ldots, \sigma_{t-1}$ be the Artin generators of $B_{t}$. Consider the Garside braid $\Delta_{t}$ on $t$ strands defined by 
$$ (\sigma_{t-1}\sigma_{t-2} \cdots \sigma_{1})( \sigma_{t-1}\sigma_{t-2} \cdots \sigma_{2}) \cdots (\sigma_{t-1}), $$
 and illustrated in Figure~\ref{treccia}. Note that $\Delta_{t}$ represents a positive half-twist of all the braid strands. The braid $\Delta_{t}^{2}$ belongs to the center of the braid group, i.e. it commutes with every braid. Moreover, $\Delta_{t}^{-1}$ can be written, after some braid operations, as 
$$ (\sigma_{t-1}^{-1}\sigma_{t-2}^{-1} \cdots \sigma_{1}^{-1})( \sigma_{t-1}^{-1}\sigma_{t-2}^{-1} \cdots \sigma_{2}^{-1}) \cdots (\sigma_{t-1}^{-1}). $$

\begin{figure}[h!]                      
\begin{center}                         
\includegraphics[width=11cm]{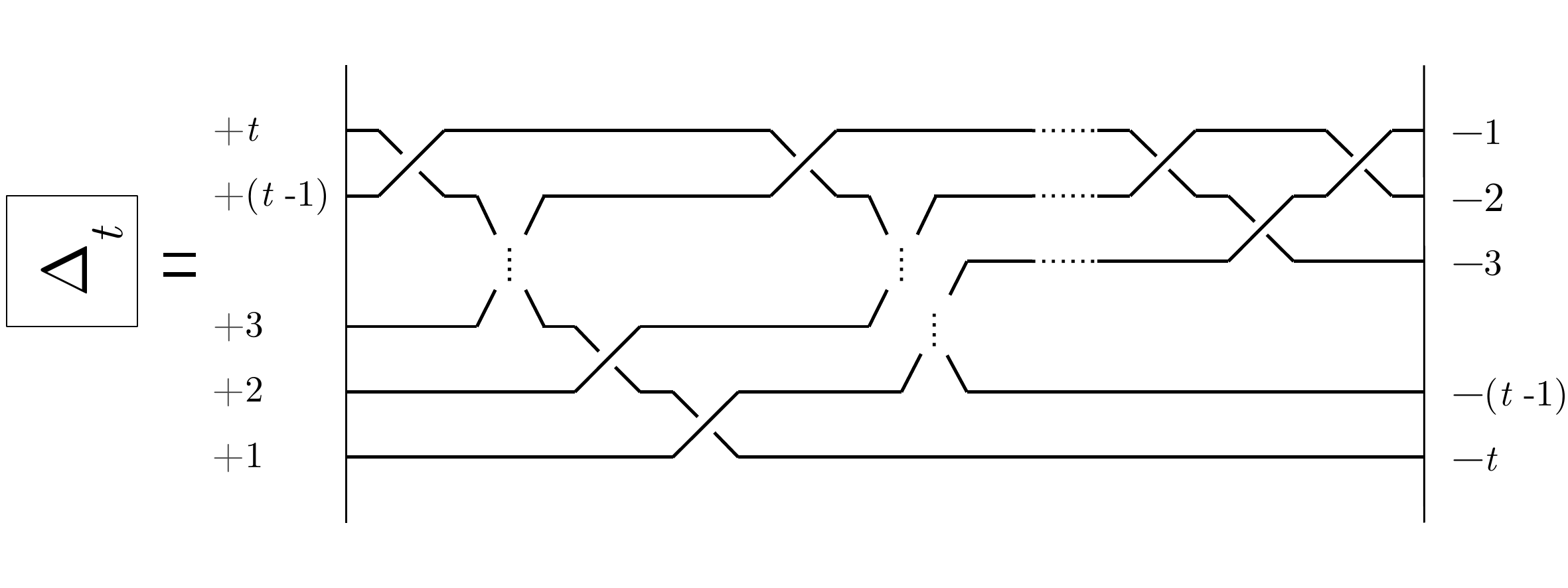}
\caption[legenda elenco figure]{The braid $\Delta_{t}$.}\label{treccia}
\end{center}
\end{figure}

The following Proposition explains how to transform a band diagram into a standard disk diagram. 

\begin{prop}\label{BL}
Let $L$ be a link in $L(p,q)$ assigned via a band diagram $B_{L}$. A standard disk diagram $D_{L}$ representing $L$ can be obtained by the construction depicted in Figure~\ref{B-L}.

\begin{figure}[h!]                      
\begin{center}                         
\includegraphics[width=10cm]{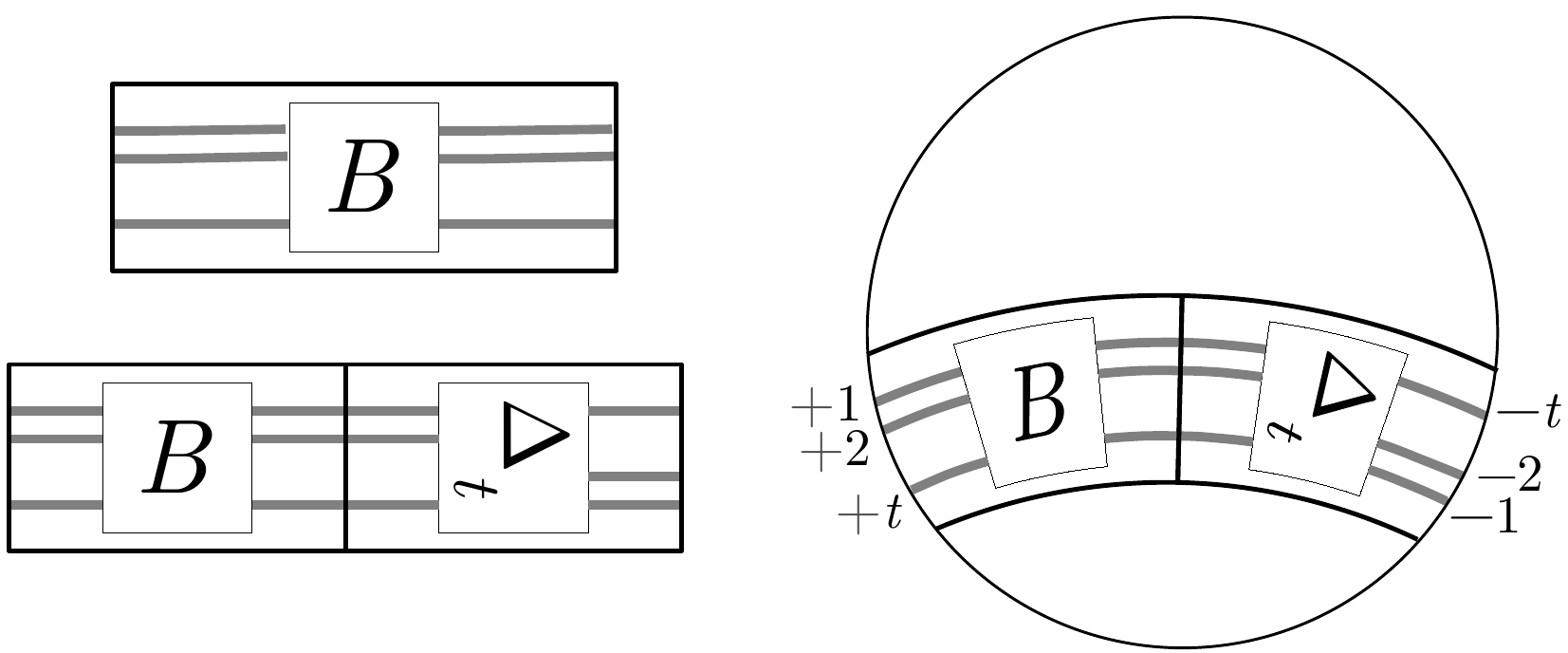}
\caption[legenda elenco figure]{From band diagram $B_{L}$ to disk diagram $D_{L}$ in $L(p,q)$.}\label{B-L}
\end{center}
\end{figure}

Consider the band diagram $B_{L}$, the rectangle has two opposite identified sides, with $t$ points on each of them; add to the right side of the band diagram the braid $\Delta_{t}$, then put the resulting band inside a disk, with the opposite sides of the new rectangle on the boundary of the disk. Add the indexation $+1, +2, \ldots, +t$ on the points of the left side of the rectangle and $-1, -2, \ldots, -t$ on the other boundary points: the result is the desired disk diagram $D_{L}$.
\end{prop}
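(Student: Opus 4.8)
The plan is to verify that the construction in Figure~\ref{B-L} indeed produces a valid disk diagram and that this disk diagram represents the same link $L \subset L(p,q)$ as the band diagram $B_L$. The key observation is that the band diagram encodes $L$ as a tangle in a rectangle whose two vertical sides are identified, and this gluing is precisely what realizes the $p/q$-surgery (equivalently, the lens model identification on the equator). So the proof should proceed by comparing the two surgery/gluing descriptions and showing they agree up to isotopy.

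First I would recall, from \cite{HP} and the earlier paragraphs, exactly how the band diagram sits inside $L(p,q)$: the rectangle is a meridional disk of the surgery solid torus (or, in the lens model, a fundamental domain neighborhood of the equatorial disk $B^2_0$), the $t$ marked points on each side are the intersections of $L$ with the disk $E_+ \cup E_-$, and the identification of the two sides is by the map that on the boundary circle $\partial B^2_0$ acts as the rotation $g_{p,q}$ (rotation by $2\pi q/p$), i.e.\ sending point $+i$ to $-i$ after a cyclic shift induced by the $2\pi q/p$ twist. Then I would describe how the disk diagram of \cite{CMM} encodes $L$: the projection of $F^{-1}(L)$ onto $B^2_0$, with boundary points $+1,\dots,+t$ coming from $E_+$ and $-1,\dots,-t$ from $E_-$, glued by $+i \sim -i$, where passing through the boundary corresponds to re-entering after the $g_{p,q}$ twist. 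The heart of the matter is that these two gluing recipes differ exactly by the half-twist braid $\Delta_t$: closing up the band directly realizes the ``straight'' identification of the $t$ points, whereas the lens-model disk diagram wants the identification that sends the cyclically-ordered points on one arc of $\partial B^2_0$ to the cyclically-ordered points on the other arc, and the passage between these two matchings of $t$ points on a circle is a half-twist, i.e.\ $\Delta_t$. Adding $\Delta_t$ to the right side of the band before inserting it into the disk absorbs precisely this discrepancy.

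Concretely, the steps I would carry out are: (1) isotope the band diagram so that the two identified sides are short vertical segments carrying the points $+1,\dots,+t$ and $-1,\dots,-t$ in order; (2) embed the rectangle into the disk $B^2_0$ so that the left side lands on one arc of $\partial B^2_0$ and the right side, after appending $\Delta_t$, lands on the complementary arc, checking that the half-twist $\Delta_t$ exactly converts the band identification into the lens identification $F|_{E_+}$; (3) check that the resulting boundary labels read in the cyclic order of $\partial B^2_0$ are $+1,\dots,+t,-1,\dots,-t$, so that the diagram is standard in the sense defined above; (4) verify regularity of the projection (no cusps, transversal double points, finitely many double points), which follows because the band diagram projection was regular and appending a braid and embedding in a disk are generic operations after a small isotopy; and (5) note the construction is reversible, so no information is lost. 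One should also double-check the orientation conventions so that the twist is $\Delta_t$ and not $\Delta_t^{-1}$, and that the rotation direction of $g_{p,q}$ matches the sign of the half-twist; getting these consistent is the one genuinely fiddly point.

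The main obstacle I expect is step (2): making rigorous the claim that the half-twist braid $\Delta_t$ is exactly the reparametrization relating the band-identification of the $t$ marked points to the lens-model identification on the equatorial circle. This is essentially a claim about how a disk with two boundary arcs, each carrying $t$ labeled points, can be glued to form the surgery torus' meridian disk versus the lens equatorial disk, and the two gluings are related by rotating one arc past the other, which on the level of the tangle strands connecting them is precisely $\Delta_t$ (a positive half-twist of all $t$ strands). I would handle this by an explicit picture-level argument (as in Figure~\ref{B-L}), tracking the image of each marked point $+i$ under both identifications and observing the two cyclic matchings differ by the permutation $i \mapsto t+1-i$, which is realized geometrically by $\Delta_t$. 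The remaining verifications (regularity, standardness, reversibility) are then routine.
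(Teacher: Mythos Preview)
Your overall strategy---tracking the embedding of the band into the equatorial disk and identifying the half-twist $\Delta_t$ as the correction term---is correct and matches the paper's conclusion. The paper, however, routes the argument through the genus-one Heegaard splitting: it views the band diagram as a projection of $L$ onto a longitudinal annulus inside one Heegaard solid torus, then uses the explicit geometric equivalence between the Heegaard model and the lens model of $L(p,q)$ (illustrated for $L(5,2)$) to carry that solid torus into $B^3$ and project onto the equatorial disk. The $\Delta_t$ twist is then read off from this embedding-and-projection, rather than from a direct comparison of boundary-point matchings as you propose. Your combinatorial shortcut (the permutation $i \mapsto t+1-i$ realized by $\Delta_t$ when two arcs of a circle carry the endpoints) is a legitimate and arguably cleaner way to see the same phenomenon, and your extra care with regularity and standardness is more explicit than the paper's.

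Two points in your setup should be corrected, though they do not break the argument. First, the band rectangle is \emph{not} a meridional disk of the surgery solid torus; it is a radially cut longitudinal annulus (the punctured projection plane cut open), and the identification of its two vertical sides is the trivial annulus gluing, not the rotation $g_{p,q}$. Second, and relatedly, the $q$-dependence does \emph{not} enter the band-to-disk conversion at all: the construction appends $\Delta_t$ regardless of $q$, and the $p/q$ information lives entirely in the $SL$ (equivalently $R_7$) move. So your sentence attributing the boundary identification to ``a cyclic shift induced by the $2\pi q/p$ twist'' should be dropped; the half-twist $\Delta_t$ arises purely from flattening an annulus into a disk, independently of $q$.
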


\begin{proof}
The band diagram may be seen as the result of a genus one Heegaard splitting of $L(p,q)$, where the link is contained inside one of the two solid tori and is regularly projected on the annulus which has as a boundary of $q$ longitudes of the solid torus. Following the geometric description of the equivalence between the Heegaard splitting model and the lens model of the lens spaces, depicted for the particular case of $L(5,2)$ in Figure~\ref{QtoHS}, 
\begin{figure}[h!]                      
\begin{center}                         
\includegraphics[width=9.8cm]{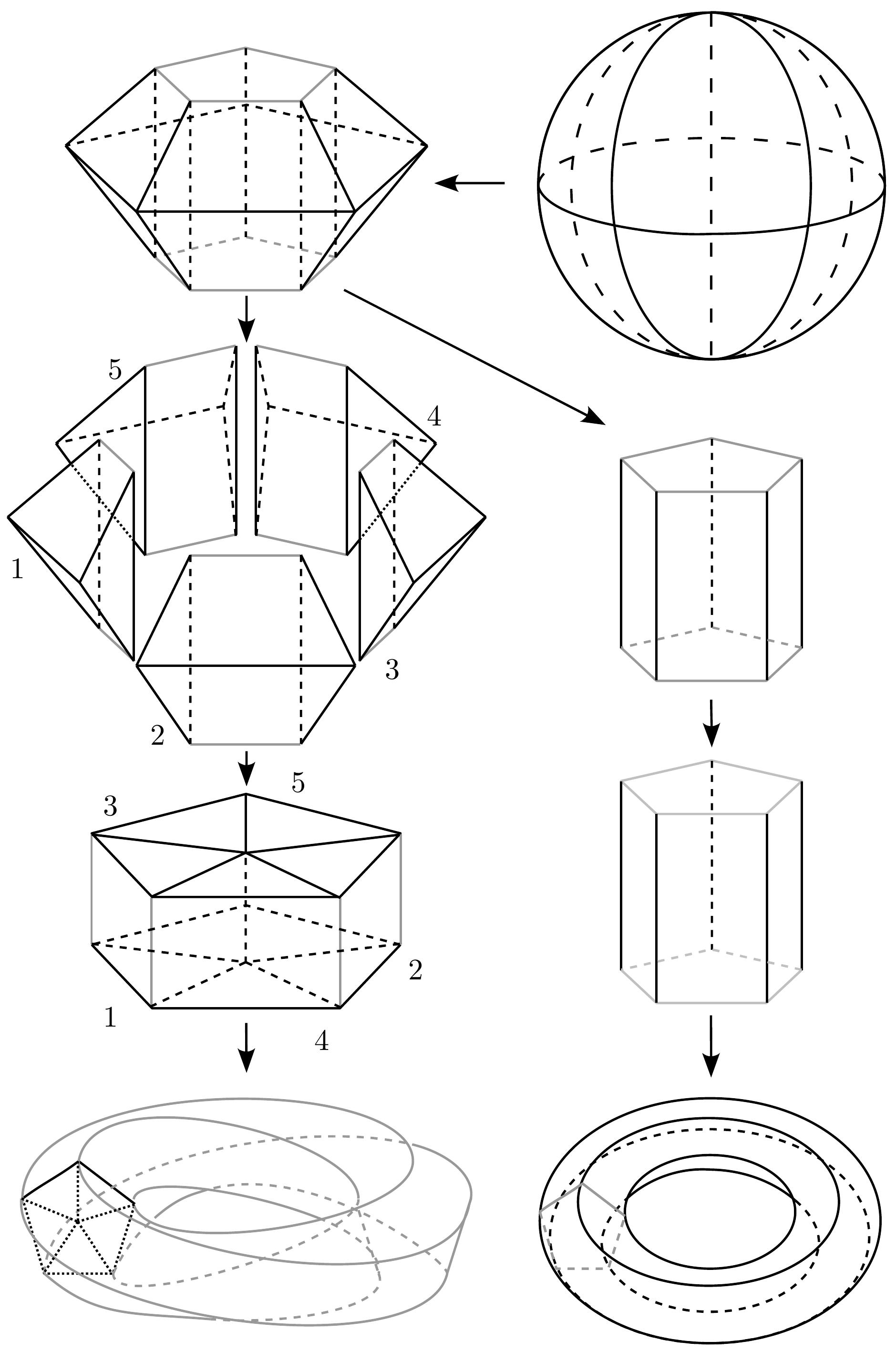}
\caption[legenda elenco figure]{From the lens model to Heegaard splitting.}\label{QtoHS}
\end{center}
\end{figure}
we can put the band diagram in one solid torus as depicted in Figure~\ref{B-Ldim}, then put the solid torus inside the lens model of the lens space, and project the band diagram onto the equatorial disk. During this operation, we have a twist, described by $\Delta_{t}$. Finally, adding the labels to the boundary points, we get the desired disk diagram $D_{L}$.
\begin{figure}[h!]                      
\begin{center}                         
\includegraphics[width=11cm]{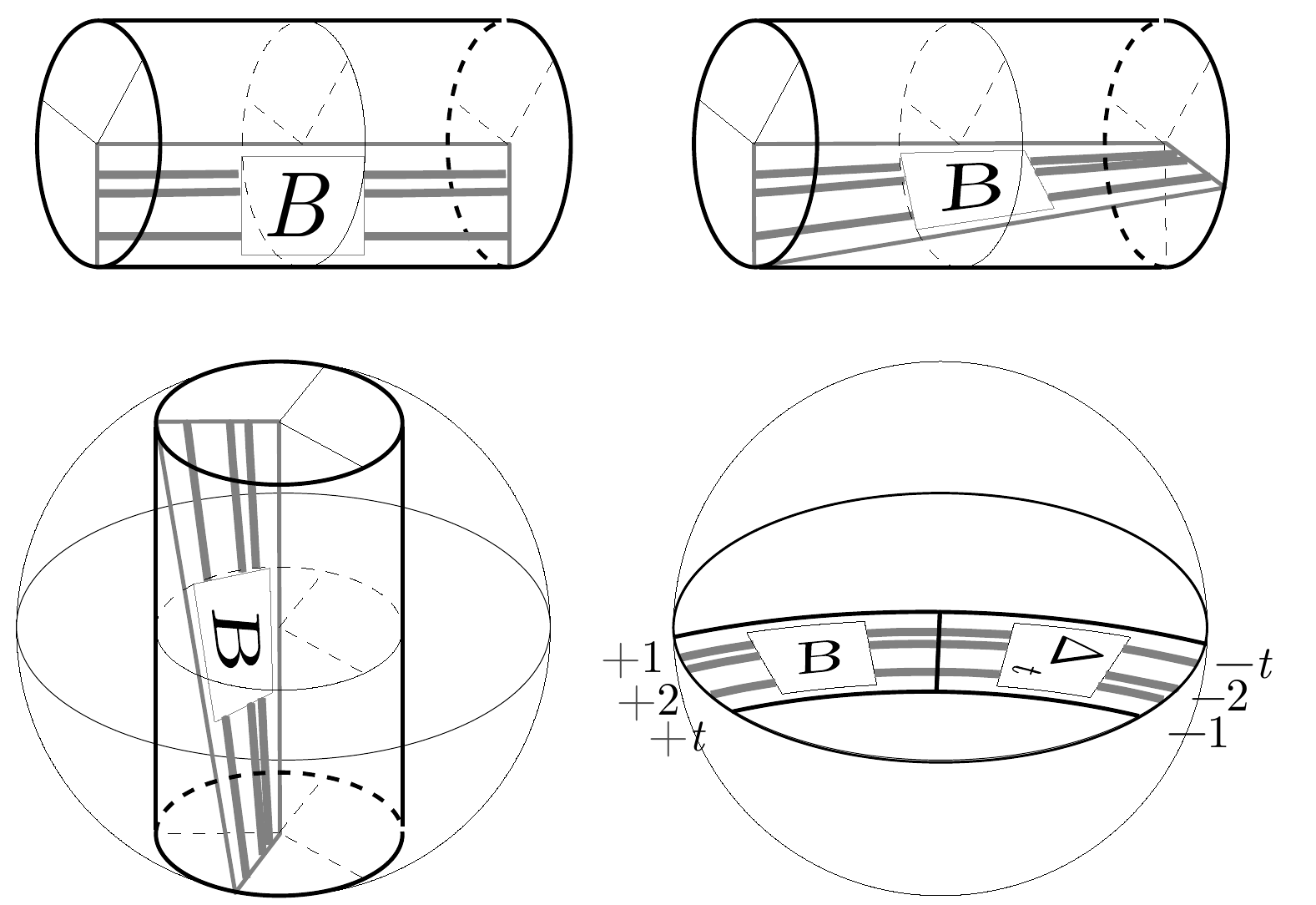}
\caption[legenda elenco figure]{From the Heegaard splitting to the lens model of $L(p,q)$.}\label{B-Ldim}
\end{center}
\end{figure}
\end{proof}

On the other hand, we can recover the band diagram of a link from the disk diagram.

\begin{prop}\label{DB}
Let $L$ be a link in $L(p,q)$, described by a disk diagram. Let $D_{L}$ be the standard disk diagram obtained from Proposition \ref{standard}. A band diagram $B_{L}$ for $L$ is  constructed using the geometric algorithm described in Figure~\ref{L-B}.

\begin{figure}[h!]                      
\begin{center}                         
\includegraphics[width=10cm]{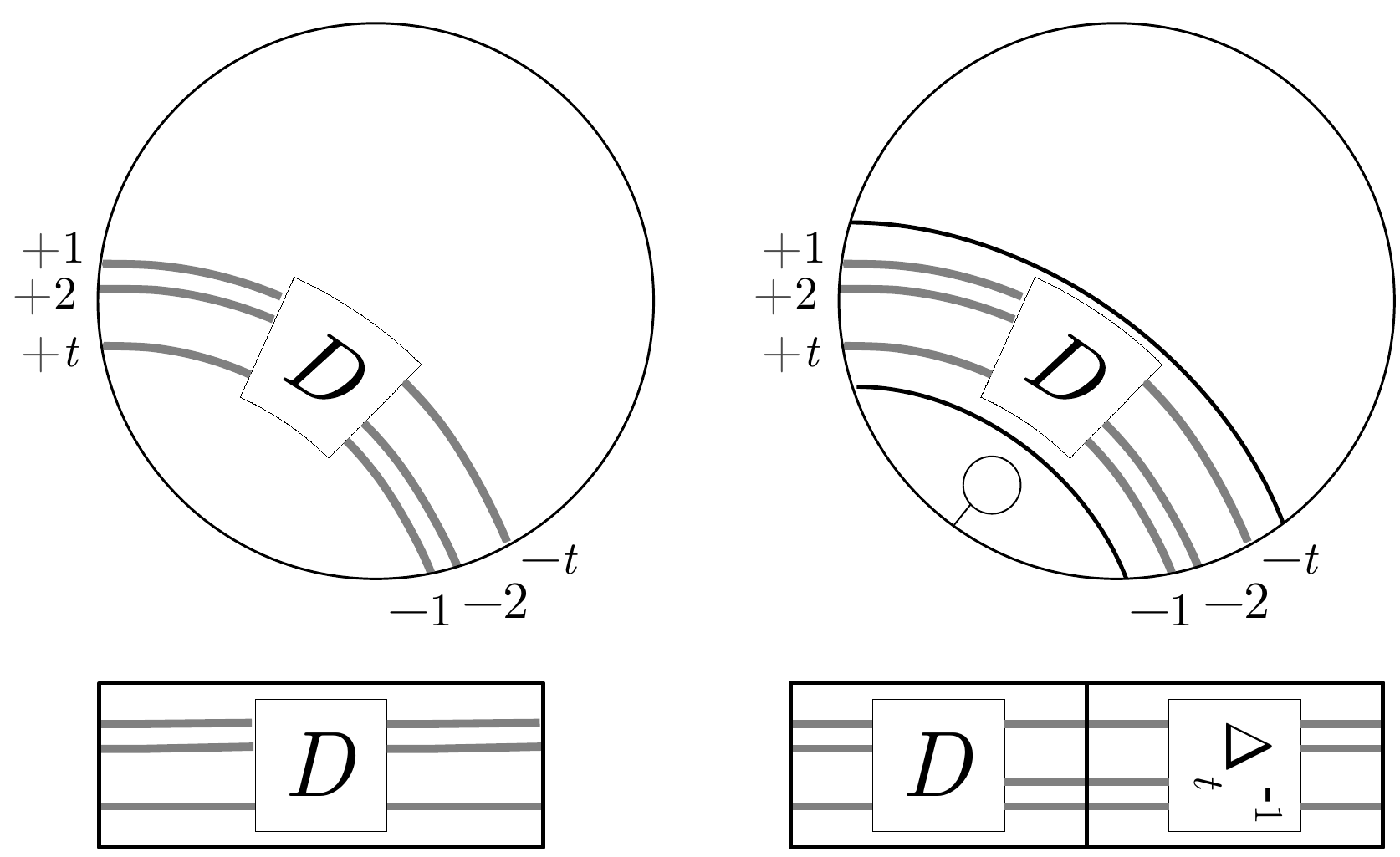}
\caption[legenda elenco figure]{From disk diagram $D_{L}$ to band diagram $B_{L}$ in $L(p,q)$.}\label{L-B}
\end{center}
\end{figure}

Consider the disk diagram $D_{L}$ and open the disk on the right of the $+1$ point, as in Figure~\ref{L-B}; this way a rectangle is obtained, with identified points only on the left and right sides, then add the braid $\Delta_{t}^{-1}$ on the right side and this is the desired band diagram for $L$. 
\end{prop}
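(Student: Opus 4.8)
The strategy is to exhibit the construction of Figure~\ref{L-B} as a step-by-step inverse of the construction of Figure~\ref{B-L} used in Proposition~\ref{BL}. First, starting from an arbitrary disk diagram of $L$, Proposition~\ref{standard} produces a standard disk diagram $D_L$, in which the boundary labels, read along $\partial B^2_0$, appear in the order $+1,\ldots,+t,-1,\ldots,-t$ with the identification $+i\sim-i$. Since the moves invoked in Proposition~\ref{standard} (the sign exchange when $p=2$, or a sequence of $R_6$ moves when $p>2$) are realized by small isotopies, $D_L$ still represents the link $L$, so it suffices to build a band diagram from $D_L$.

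Next I would run the geometric argument in the proof of Proposition~\ref{BL} backwards. View the equatorial disk $B^2_0$ carrying $D_L$ as sitting inside the lens model of $L(p,q)$, lift to $B^3$, and use the correspondence between the lens model and the genus-one Heegaard splitting (Figures~\ref{QtoHS} and~\ref{B-Ldim}) read in the reverse direction: the link lies in one of the two solid tori and is projected onto the annulus bounded by $q$ longitudes. Cutting $B^2_0$ along a radial arc placed to the right of the point $+1$ unrolls the annular region into a rectangle whose two vertical sides carry the $t$ boundary points; this is precisely the inverse of the operation ``put the resulting band inside a disk'' in Proposition~\ref{BL}. It then remains to undo the twist that was introduced in that proof when the band was projected onto the equatorial disk, namely the positive half-twist $\Delta_t$. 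Adjoining $\Delta_t^{-1}$ on the right-hand side of the rectangle cancels it, because $\Delta_t^{-1}\Delta_t$ is the trivial braid; after this cancellation the two identified sides of the rectangle carry their points in the order and at the heights required of a band diagram, with the straight-across identification, and we obtain a genuine band diagram $B_L$ for $L$.

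The point I expect to be the most delicate is the bookkeeping of the boundary points. One has to check that the cyclic identification $+i\sim-i$ along $\partial B^2_0$, after cutting to the right of $+1$ and composing with $\Delta_t^{-1}$ (which carries the strand in position $i$ to position $t+1-i$), becomes exactly the height-preserving identification of the two vertical sides of the band rectangle. This is the mirror of the corresponding verification in Proposition~\ref{BL}, and, as there, it is cleanest to track a small number of strands through Figures~\ref{B-L}, \ref{B-Ldim} and~\ref{L-B} rather than to argue abstractly. Finally, because the construction is literally the composition of the inverses of the three steps of Proposition~\ref{BL} in reverse order, applying the construction of Proposition~\ref{BL} to $B_L$ returns the diagram $D_L$; this confirms both that $B_L$ is a well-formed band diagram and that it represents $L$.
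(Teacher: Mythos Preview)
Your proposal is correct and follows exactly the same approach as the paper: the paper's proof is the single sentence ``It is exactly the converse geometric construction of the proof of Proposition~\ref{BL},'' and you have simply spelled out that reversal in detail. Your additional bookkeeping about the boundary labels and the cancellation of $\Delta_t$ by $\Delta_t^{-1}$ is consistent with, but more explicit than, what the paper writes.
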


\begin{proof}
It is exactly the converse geometric construction of the proof of Proposition~\ref{BL}. \end{proof}

The intuitive correspondence between the Reidemeister moves on the disk diagrams and the band diagrams are represented in Table~\ref{movesB}.

\begin{table}[h!]
\label{movesB}
\begin{center}
\begin{tabular}{|c|c|}
\hline 
Disk diagram & Band diagram  \\
\hline
$R_1$ & $R_1$ \\
\hline
$R_2$ & $R_2$  \\
\hline
$R_3$ & $R_3$   \\
\hline
$R_4$ & isotopy of an arc  \\
\hline
$R_5$ & isotopy of a crossing  \\
\hline
$R_6$ & not allowed on standard diagram   \\
\hline
$R_7$ & $SL$  \\
\hline
\end{tabular}
\end{center}
\label{table}
\end{table}

\section{The KBSM of links in $L(p,q)$ is an essential invariant}

In this section we provide examples of different links with equivalent lifts of \cite{Mn2} to show that the KBSM is an essential invariant, that is to say, it may distinguish links with equivalent lifts.

\paragraph{KBSM of links in lens spaces via punctured disk diagrams}
The KBSM of a 3-manifold M, denoted by $S_{2, \infty}(M)$, is defined in the following way. 
Take an unoriented framed link $L \subset M$ and let $\mathcal{L}_{\textit{fr}}$ be the set of ambient isotopy classes of unoriented framed links in $M$, where we also add the empty knot $\emptyset$. Let $L^{(n)}$ denote the framed link obtained by $L \subset \mathcal{L}_{\textit{fr}}$ by adding $n$ full right-handed twists to the framing. Let $R=\mathbb{Z}[A^{\pm1}]$ be the ring of Laurent polynomials in variable $A$.
Define $\mathcal{S}_{\textit{fr}}$ to be the submodule of $R \mathcal{L}_{\textit{fr}}$ generated by the skein relations $L-AL_{0}-A^{-1}L_{\infty}$ and $L^{(1)}+A^{3}L$, where $L_{0}$ and $L_{\infty}$ denote the links obtained by the resolutions of one crossing of $L$ as Figure~\ref{skeincrossing} shows.

\begin{figure}[h!]                      
\begin{center}                         
\includegraphics[width=8cm]{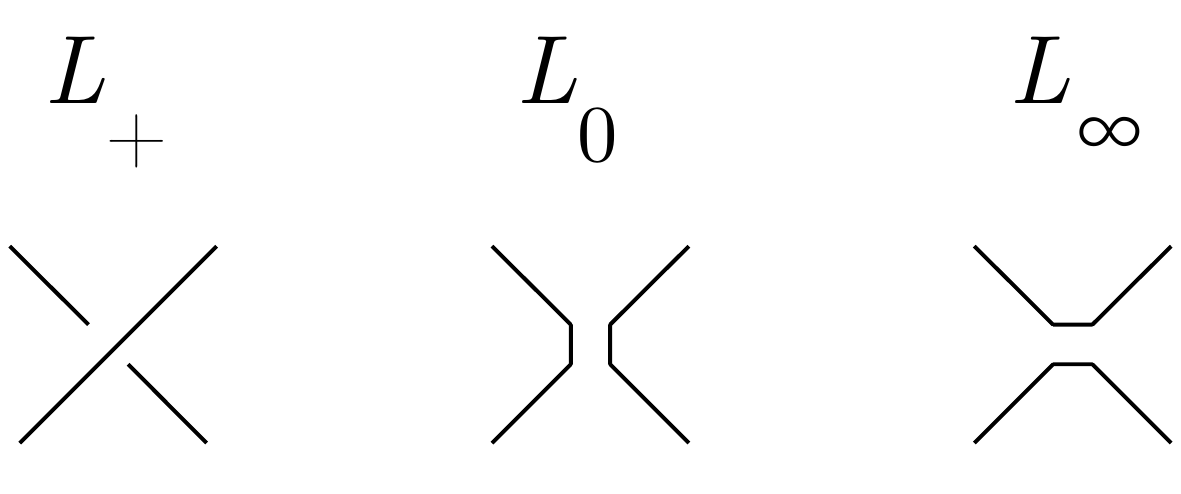}
\caption[legenda elenco figure]{Resolution of a crossing of $L$.}\label{skeincrossing}
\end{center}
\end{figure}

The \emph{Kauffman bracket skein module} is the quotient $S_{2,\infty}(L(p,q))=R \mathcal{L}_{\textit{fr}} / \mathcal{S}_{\textit{fr}} $.

In order to understand the skein module for a particular 3-manifold $M$, we have to present a basis of the module and understand the torsion (if it exists).

We use the representation of links in lens space given by a punctured disk diagram. These diagrams are useful also to represent links in the solid torus (see Section \ref{bandsection}). Let $x_{0}$ denote the local unknot in the solid torus and $x_{i}$ denote the link with $i$ components described in Figure~\ref{toricbasis}. 

\begin{figure}[h!]                      
\begin{center}                         
\includegraphics[width=12cm]{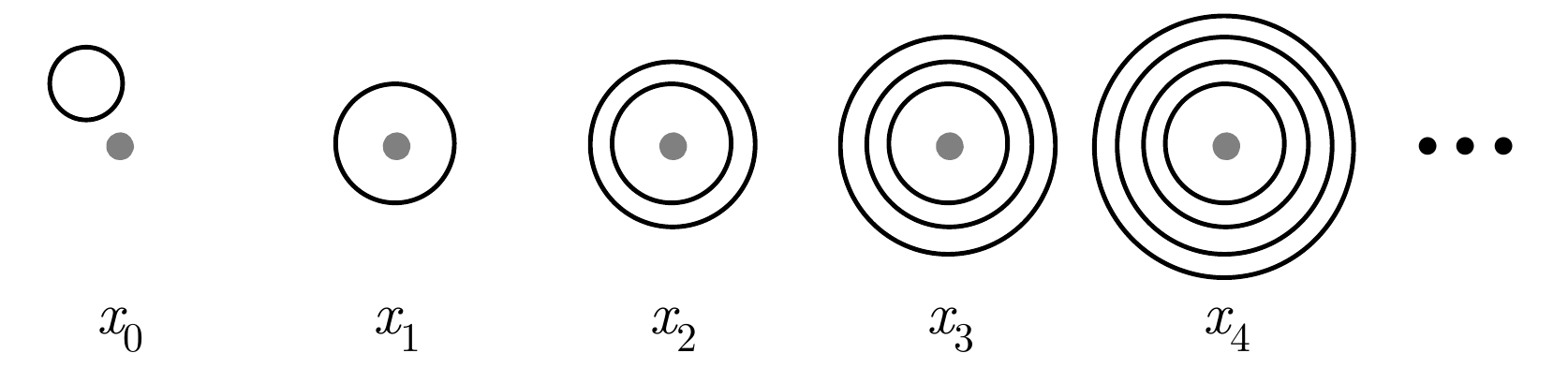}
\caption[legenda elenco figure]{KBSM basis for the punctured disk diagram.}\label{toricbasis}
\end{center}
\end{figure}

\begin{prop}{\upshape \cite[Corollary 2]{Tu1}}\label{solidtorus}
The KBSM of the solid torus is freely generated by the set $\{ x_{i} \}_{i \in \mathbb{N}}$.
\end{prop}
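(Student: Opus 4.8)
\noindent\emph{Proof proposal.} Regard the solid torus as $V=A\times[0,1]$, where $A=S^1\times[0,1]$ is the annulus, so that every framed link in $V$ is isotopic to one carrying the blackboard framing of a regular diagram $D\subset A$. The proposition contains two assertions --- that the $x_i$ generate $S_{2,\infty}(V)$, and that they generate it freely --- and the plan is to treat them separately: generation is elementary, while freeness is where the content lies.

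\medskip
\noindent\emph{Generation.} I would induct on the number of crossings of $D$. A crossing can be removed using the Kauffman relation $L=AL_0+A^{-1}L_\infty$, which rewrites $[D]$ as an $R$-combination of two diagrams with one fewer crossing, while $L^{(1)}=-A^3L$ absorbs any framing discrepancy; so it suffices to treat a crossingless $D$. Such a diagram is a disjoint union of embedded circles in $A$, each either bounding a disk in $A$ or parallel to the core $S^1\times\{1/2\}$; a null-homotopic circle, being split off, can be deleted at the cost of the scalar $-A^2-A^{-2}$, and a family of $i$ disjoint essential circles is isotopic to $x_i$ (with $x_0=\emptyset$). Hence every element of $S_{2,\infty}(V)$ is an $R$-linear combination of the $x_i$.

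\medskip
\noindent\emph{Freeness.} This is the step I expect to be the real obstacle. The naive idea --- the Kauffman bracket of the image of a link under the standard inclusion $V\hookrightarrow\s3$ --- is not enough: it sends $x_i$ to $(-A^2-A^{-2})^i$ up to normalisation, and these elements of $R$ are $R$-linearly dependent by the Chebyshev recursion, so one functional cannot separate the $x_i$. Instead I would fix a genus-one Heegaard splitting $\s3=V\cup_{T^2}W$ into two solid tori, giving a well-defined $R$-bilinear pairing
\[ \langle\,\cdot\,,\,\cdot\,\rangle\colon S_{2,\infty}(V)\times S_{2,\infty}(W)\longrightarrow S_{2,\infty}(\s3)=R,\qquad \langle[L_1],[L_2]\rangle=[\,L_1\sqcup L_2\,], \]
obtained by placing $L_1\subset V$ and $L_2\subset W$ disjointly inside $\s3$; this is well defined because a crossing resolution performed inside one factor is still a legal resolution in $\s3$ and does not touch the other factor. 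Since the cores of $V$ and $W$ form a Hopf link in the standard splitting, $\langle x_i,x_j'\rangle$ is the bracket in $\s3$ of the $(i,j)$-cabled Hopf link (replace the two components by $i$ and $j$ parallel $0$-framed copies). It then remains to show that the Gram matrix $M=(\langle x_i,x_j'\rangle)_{i,j\ge 0}$ has all of its leading finite minors non-zero in $R$: granting this, $\sum_i c_ix_i=0$ in $S_{2,\infty}(V)$ forces $\sum_i c_i\langle x_i,x_j'\rangle=0$ for all $j$, whence $(c_i)=0$ since $R$ is a domain, so the $x_i$ are $R$-linearly independent, and with generation this yields freeness. To establish the non-vanishing of the minors I would pass to $\mathbb{Q}(A)$ and change the basis $\{x_i\}$ to the elements $e_i$ obtained by cabling the core with the $i$-th Jones--Wenzl idempotent: this change of basis is unitriangular, and $\langle e_i,e_j'\rangle$ is, up to a diagonal sign, the quantum integer $[(i+1)(j+1)]$, so $M$ becomes equivalent to a truncation of the $SU(2)$ $S$-matrix, whose non-degeneracy is classical (it is invertible when $A$ is specialised to a suitable root of unity, so its determinant is a non-zero Laurent polynomial). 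A more economical alternative, bypassing the Temperley--Lieb machinery, is to invoke the general theorem that $S_{2,\infty}(F\times[0,1])$ is free on the isotopy classes of multicurves on $F$; for $F=A$ these are precisely the $x_i$.
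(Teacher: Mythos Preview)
The paper does not prove this proposition; it is simply quoted from Turaev \cite{Tu1} without argument, so there is nothing in the paper to compare your proposal against. Your argument is correct: generation is the standard crossing-reduction, and for freeness both routes you sketch are valid --- the Hopf-pairing computation (with the unitriangular passage to the Jones--Wenzl basis and the root-of-unity specialisation showing the leading minors are non-zero Laurent polynomials) and the appeal to the general multicurve basis of $S_{2,\infty}(F\times[0,1])$. The latter is in fact closer to how Turaev and Przytycki originally established the result, so if you want to align with the cited source that is the route to take; the Hopf-pairing argument is more machinery but has the virtue of being self-contained. One minor slip of convention: in the paper $x_0$ denotes the local unknot, not the empty link, so your parenthetical ``$x_0=\emptyset$'' does not match the stated normalisation, though this does not affect the substance of the proof.
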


The following proposition has been presented in \cite{HP}:

\begin{prop}{\upshape \cite[Theorem 4]{HP}}\label{KBSMlpq}
For $p\geq 1$ the KBSM of $L(p,q)$ is freely generated by $x_{0}, x_{1}, \ldots, x_{\lfloor p/2 \rfloor}$, where $\lfloor r \rfloor$ denotes the integer part of $r$.
\end{prop}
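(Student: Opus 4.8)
The plan is to follow the strategy of Hoste--Przytycki as it adapts to our punctured disk diagram setting. We work in the solid torus first and then impose the single relation coming from the $SL$ (slide) move, which is the diagrammatic incarnation of the $p/q$-surgery. By Proposition~\ref{solidtorus}, every framed link in the solid torus is, in the KBSM of the solid torus, a Laurent-polynomial combination of the basis elements $x_{0}, x_{1}, x_{2}, \ldots$, where $x_{i}$ is the disjoint union of $i$ parallel essential curves (and $x_{0}$ is a trivial loop, which by the framing relation contributes the scalar $-A^{2}-A^{-2}$, so effectively $x_{0}$ may be absorbed into the coefficient ring and we may restrict attention to the $x_{i}$ with $i \geq 1$ together with the empty link). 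Since $S_{2,\infty}(L(p,q))$ is the quotient of $S_{2,\infty}(\text{solid torus})$ by the submodule generated by all diagrams related via the $SL$ move, it is automatically spanned by the images of the $x_{i}$; the entire content of the proposition is therefore (a) that the $x_{i}$ for $i > \lfloor p/2\rfloor$ can be rewritten in terms of the lower ones, and (b) that no further relations hold, i.e. $x_{0}, \ldots, x_{\lfloor p/2 \rfloor}$ are linearly independent.

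For part (a), the key computation is to understand what the $SL$ move does to $x_{i}$. Pushing one strand of $x_{i}$ through the surgery region (the puncture) and then resolving the resulting crossings via the Kauffman skein relation expresses a diagram with $p$ extra parallel strands around the puncture as a combination of diagrams with fewer strands. Concretely, sliding produces a relation of the form $x_{i} \sim$ (a linear combination, with coefficients in $R$, of $x_{j}$ with $j < i$) valid once $i$ is large enough that the slide can be performed and all crossings resolved; iterating, or doing the book-keeping with Chebyshev-type recursions for the parallel-strand cabling, shows that every $x_{i}$ with $i \geq p$ reduces, and a slightly finer analysis — exploiting the symmetry that a band with $p$ strands and a band with the same strands ``read backwards'' give isotopic links after a slide, so that $x_{k}$ and $x_{p-k}$ are identified up to lower-order terms — brings the cutoff down to $\lfloor p/2 \rfloor$. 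This symmetry argument is exactly the point where the fact that we only need generators up to $\lfloor p/2\rfloor$ rather than $p-1$ comes from, and organizing the reduction cleanly is the first half of the work.

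The harder half, part (b), is proving that $x_{0}, x_{1}, \ldots, x_{\lfloor p/2\rfloor}$ are $R$-linearly independent in $S_{2,\infty}(L(p,q))$, i.e. that the module is \emph{free} of exactly that rank with no torsion. The standard way to do this, and what I would do, is to exhibit enough invariants (linear functionals) on the KBSM that separate these generators: one constructs, for each relevant value, a homomorphism from $R\mathcal{L}_{\textit{fr}}$ to $R$ (or to some auxiliary module) that kills the skein submodule $\mathcal{S}_{\textit{fr}}$ \emph{and} the surgery relations, and that takes prescribed independent values on the $x_{i}$; equivalently one uses the known computation of $S_{2,\infty}(L(p,q))$ as a free $R$-module (this is precisely \cite[Theorem~4]{HP}, whose independence part was established there via a careful analysis of the recursive relation and a dimension count over the field of fractions) and transports it across our diagrammatic dictionary. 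Since the proposition is quoted verbatim from \cite{HP}, the honest ``proof'' in the paper is presumably just a pointer plus the remark that our $x_{i}$ correspond to theirs under the band-diagram correspondence of Section~\ref{bandsection}; a fuller self-contained argument would need the linear-independence input from \cite{HP}, and I expect \emph{that} — ruling out unexpected relations and torsion — to be the genuine obstacle, the spanning/reduction step being essentially a bounded diagrammatic calculation.
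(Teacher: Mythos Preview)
Your anticipation is exactly right: the paper gives no proof of this proposition at all. It is stated purely as a quotation of \cite[Theorem~4]{HP}, with no argument, sketch, or even a remark beyond the citation. So there is nothing to compare your proposal against; the paper simply imports the result.

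That said, your outline is a fair summary of how the Hoste--Przytycki argument goes, and you correctly identify where the real work lies. A small caution on part~(a): the reduction from $p-1$ down to $\lfloor p/2\rfloor$ is not quite the symmetry ``$x_{k}\sim x_{p-k}$ up to lower terms'' in the casual sense you suggest; in \cite{HP} it comes from analysing the slide relation carefully and seeing that the top term of the slid $x_{p-k}$ is (a unit times) $x_{k}$ plus strictly lower $x_{j}$, which is what you want, but the bookkeeping with the $q$ in $p/q$ matters and the coefficients are not obviously units without checking. Your instinct that part~(b), freeness and absence of torsion, is the genuine obstacle is correct, and as you say there is no way to make that self-contained without essentially reproducing the argument of \cite{HP}.
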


\begin{oss}
The computation of the Kauffman bracket of a link in $L(p,q)$ described by a punctured disk diagram is performed using the following algorithm: simplify all the crossings with the skein relation and once you are left only with the diagrams of Figure \ref{toricbasis}, substitute each $x_{i}$ for all $i > \lfloor p/2 \rfloor$ with a suitable linear combination of the basis $x_{0}, x_{1}, \ldots, x_{\lfloor p/2 \rfloor}$.
The formula for $x_i$, $i > \lfloor p/2 \rfloor$, can be found by considering $x_{i-p}$ (or $x_{p-i}$ if $i-p<0$), applying an $SL$ move and resolving the crossings with the skein relation. In the case where the winding number $w(L)$ of the link in the lens space is lower than $\lfloor p/2 \rfloor$, the KBSM of links in lens spaces coincide with the one in the solid torus.
\end{oss}

As a consequence, the KBSM of a knot in $L(p,q)$ can be recovered from the KBSM of the corresponding knot in the solid torus $T$ under the standard inclusion of $T \subset L(p,q)$. Through this method \cite{Gb} provided the KBSM-s of knots in $L(p,q)$ up to $5$ crossings.




\paragraph{Lift of links in lens spaces}

Following \cite{Mn2} we are able to construct a diagram of the lift of a link in $L(p,q)$ starting from a disk diagram. Using Proposition \ref{DB} we are able to construct a similar diagram of the lift starting from a band diagram, as shown in the following proposition.

\begin{prop}\label{teoliftband}
Let $L$ be a link in the lens space $L(p,q)$ and let $B$ be a band diagram for $L$ with $t$ boundary points; then a diagram for the lift $\widetilde L \subset \s3$ can be found by juxtaposing $p$ copies of $B$ and closing them with the braid $\Delta_{t}^{2q}$ (see Figure~\ref{bandlift}).

\begin{figure}[h!]                      
\begin{center}                         
\includegraphics[width=12.5cm]{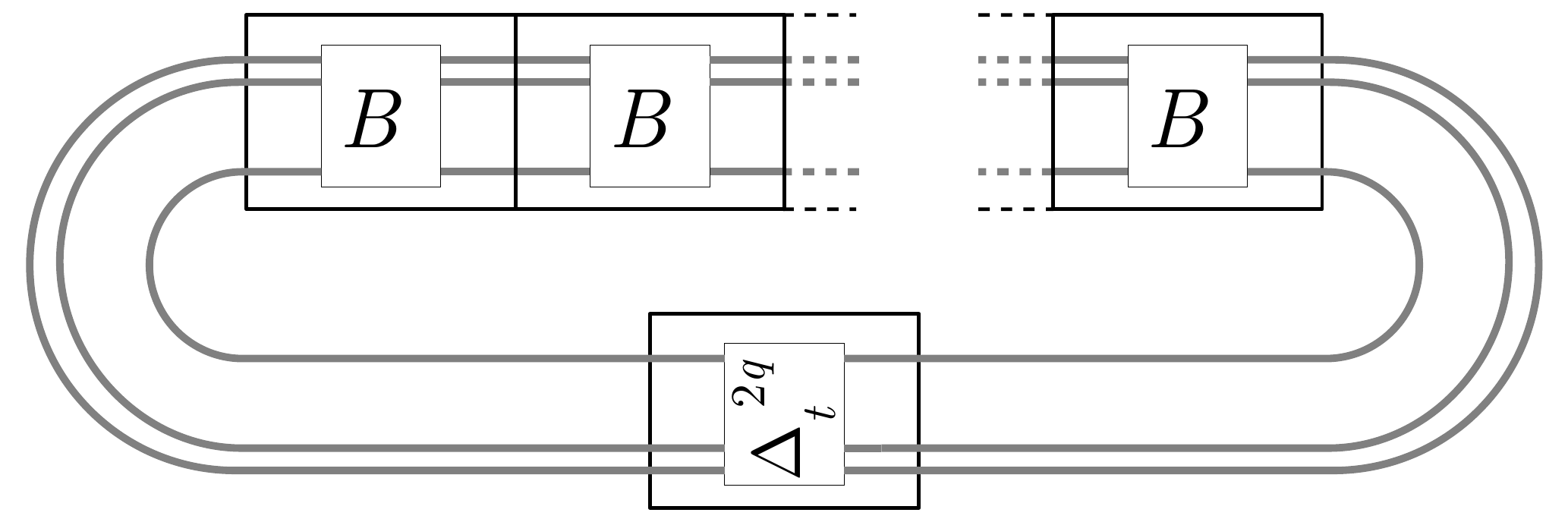}
\caption[legenda elenco figure]{Diagram of the lift of a link in lens spaces from its band diagram.}\label{bandlift}
\end{center}
\end{figure}
\end{prop}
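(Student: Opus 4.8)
The plan is to reduce everything to the known construction of the lift from a \emph{disk} diagram (the one used in \cite{Mn2}), and then translate that construction back through the disk--band dictionary of Propositions \ref{BL} and \ref{DB}. First I would recall how the lift is built from a standard disk diagram $D_L$: since $L(p,q)=B^3/\!\sim$ is covered by $S^3$ with deck group $\Z/p\Z$ acting by the rotation $g_{p,q}$, a fundamental domain for the cover upstairs is a union of $p$ ``lens-shaped'' pieces, and the preimage $F^{-1}(L)\subset B^3$ together with its $p-1$ rotated copies assembles into $\widetilde L\subset S^3$. Concretely, in \cite{Mn2} this is realized by taking $p$ copies of $D_L$, arranged cyclically, with the $-i$ boundary points of one copy glued to the $+i$ boundary points of the next copy after a rotation by $2\pi q/p$; that rotation of the $t$ strand-endpoints is exactly a $\Delta_t^{2q}$-type twist (one full twist $\Delta_t^2$ per unit of $q$, because the half-twist $\Delta_t$ implements a $\pi$ rotation of the endpoints).

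Next I would feed the band-to-disk translation into this. By Proposition \ref{BL}, a band diagram $B_L$ with $t$ boundary points yields the standard disk diagram $D_L$ obtained from $B_L$ by appending $\Delta_t$ on the right and closing up into a disk; conversely, Proposition \ref{DB} recovers $B_L$ from $D_L$ by cutting the disk and appending $\Delta_t^{-1}$. So I would substitute $D_L = B_L \cdot \Delta_t$ into the disk-diagram lift. Stacking $p$ copies of $D_L$ cyclically and inserting the gluing twist gives, schematically, the closure of
\[
\bigl(B_L\,\Delta_t\bigr)^{p}\cdot(\text{gluing twist}),
\]
and the point is that the interleaved $\Delta_t$'s from the $p$ copies, together with the disk-diagram gluing twist, recombine — using that $\Delta_t^2$ is central, hence slides freely past each copy of $B_L$ — into $p$ clean copies of $B_L$ closed up by a single braid $\Delta_t^{2q}$. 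Tracking the exponent is the crucial bookkeeping: each of the $p$ copies of $D_L$ contributes one $\Delta_t$, but $p$ of those half-twists is $\Delta_t^{p}$, which is \emph{not} what we want; the resolution is that the disk-diagram lift already contains a compensating $\Delta_t^{-p}$ hidden in its gluing data (coming from the fact that the standard cyclic arrangement of disks "unwinds" one half-twist per sheet), and only the residual surgery framing $2\pi q/p$ accumulated over $p$ sheets survives, giving $\Delta_t^{2q}$.

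The main obstacle will be precisely this exponent-tracking step: making rigorous, rather than heuristic, the claim that the various half-twists $\Delta_t$ introduced by the $p$ copies of the band-to-disk passage cancel against the half-twists implicit in the disk-diagram lift, leaving exactly $\Delta_t^{2q}$. I would handle it by working equivariantly upstairs in $S^3$: identify the solid torus containing $B_L$ (the Heegaard-splitting picture from the proof of Proposition \ref{BL}) with one of the $p$ periodic pieces of $S^3$ under the free $\Z/p\Z$-action coming from the lift, and observe that the monodromy of going once around the covering direction is the gluing homeomorphism of the genus-one Heegaard splitting of $L(p,q)$, whose action on the meridian disk is multiplication matching $\Delta_t^{2q}$ after $p$ iterations. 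Once the periodic picture in $S^3$ is set up, the statement becomes the observation that $p$ fundamental band-pieces close up with the deck-transformation monodromy $\Delta_t^{2q}$, and the rest is the routine verification that this agrees diagrammatically with Figure \ref{bandlift}. I would also double-check the two degenerate cases $q=0$ (so the closure is trivial, $\Delta_t^0$, consistent with $L(p,0)$ not being a genuine lens space for $p>1$ — or rather handled separately) and $\rp3 = L(2,1)$, where $\widetilde L$ is the preimage under the standard double cover $S^3\to\rp3$ and the closure is a single full twist $\Delta_t^{2}$, to confirm the formula.
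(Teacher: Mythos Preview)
Your strategy is essentially the paper's: reduce to the disk-diagram lift of \cite[Theorem~3]{Mn2} and then translate through the band--disk dictionary of Propositions~\ref{BL} and~\ref{DB}. The paper's proof is a single sentence because the lift formula in \cite{Mn2} already presents $\widetilde L$ in blocks of the form $D_L\cdot\Delta_t^{-1}$ together with the twist $\Delta_t^{2q}$, so Proposition~\ref{DB} replaces each such block by $B_L$ directly; your worried exponent bookkeeping and the proposed equivariant fallback are therefore unnecessary detours rather than genuine obstacles.
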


\begin{proof}
Consider the planar diagram of the lift of \cite[Theorem 3]{Mn2} and convert the standard disk diagram $D_{L}$ plus the braid $\Delta_{t}^{-1}$ to the equivalent band diagram $B_{L}$. This gives exactly the diagram for $\widetilde{L}$ illustrated in Figure~\ref{bandlift}.
\end{proof}

\begin{oss}\label{Chbili}
The lift in $\s3$ of a link $L\subset L(p,q)$ is exactly a \emph{$(p,q)$-lens link} in $\s3$, according to \cite{C4}. Precisely, the $n$-tangle $T$ that Chbili uses in his construction is the band diagram $B_{L}$ for $L$.
In the same paper he makes explicit that the lift is a freely periodic link in $\s3$.
\end{oss}

\paragraph{Essential invariants}

It is clear that every link invariant in $\s3$ induces a link invariant in $L(p,q)$ if the first invariant is computed on the lift. 
On the contrary, it is important to know if an link invariant for $L(p,q)$ is a real $L(p,q)$ invariant or just an $S^3$ invariant in disguise. If an $L(p,q)$ invariant can take different values on two links with equivalent lifts, we call such an invariant \emph{essential}. 


Inequivalent links with equivalent lifts are perfect candidates to check whether an invariant $I$ of links in lens spaces is essential: find two inequivalent knots $K_1$ and $K_2$ with equivalent lifts such that $I(K_1) \neq I(K_{2})$. 

\paragraph{KBSM is an essential invariant}

In \cite{Mn2} are shown several examples which consist of different links with equivalent lifts. By applying Proposition \ref{DB} we can transform the disk diagram of the knots into band diagrams and consequently compute the Kauffman bracket for them. The result of the computations shows us that the KBSM is an essential invariant.

\begin{ese}\label{es1KBSM}

In Figure~\ref{CE1band} are represented the punctured disk diagrams of the knots  $K_1$ and $K_2$ in $L \left (p, \frac{p \pm 1}{2} \right )$. When $p>3$ and odd, the two knots are not isotopic and they both lift to the unknot in $\s3$, as shown in \cite[Example 9]{Mn2}. 
\begin{figure}[h!]                      
\begin{center}                       
\includegraphics[width=8cm]{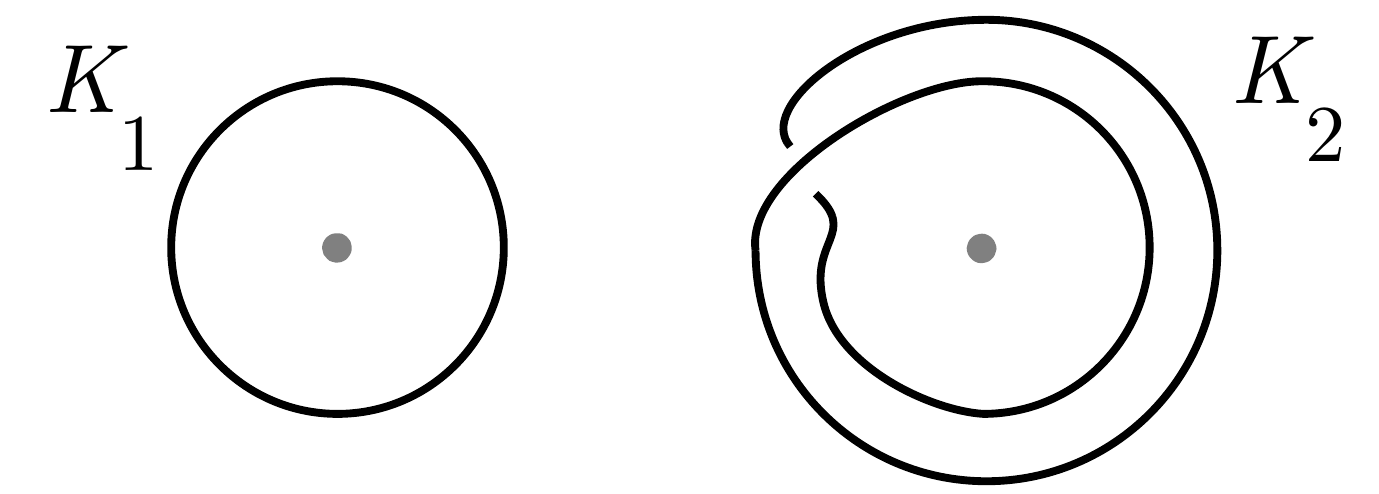}
\caption[legenda elenco figure]{Punctured disk diagrams $K_1$ and $K_2$ in $L \left (p, \frac{p \pm 1}{2} \right )$.}\label{CE1band}
\end{center}
\end{figure}
It holds that $\textrm{KBSM} (K_{1})=x_{1}$ and $\textrm{KBSM} (K_{2})=Ax_{2}+A^{-1}x_{0}$.
\end{ese}

\begin{ese}\label{es2KBSM}

The two links $L_A$ and $L_B$ in $L(4,1)$, represented by the punctured disk diagrams of Figure~\ref{CE2band}, are not isotopic since they have a different number of components, but they both lift to the Hopf link, as shown in \cite[Example~10]{Mn2}.
 \begin{figure}[h!]                      
\begin{center}                         
\includegraphics[width=8cm]{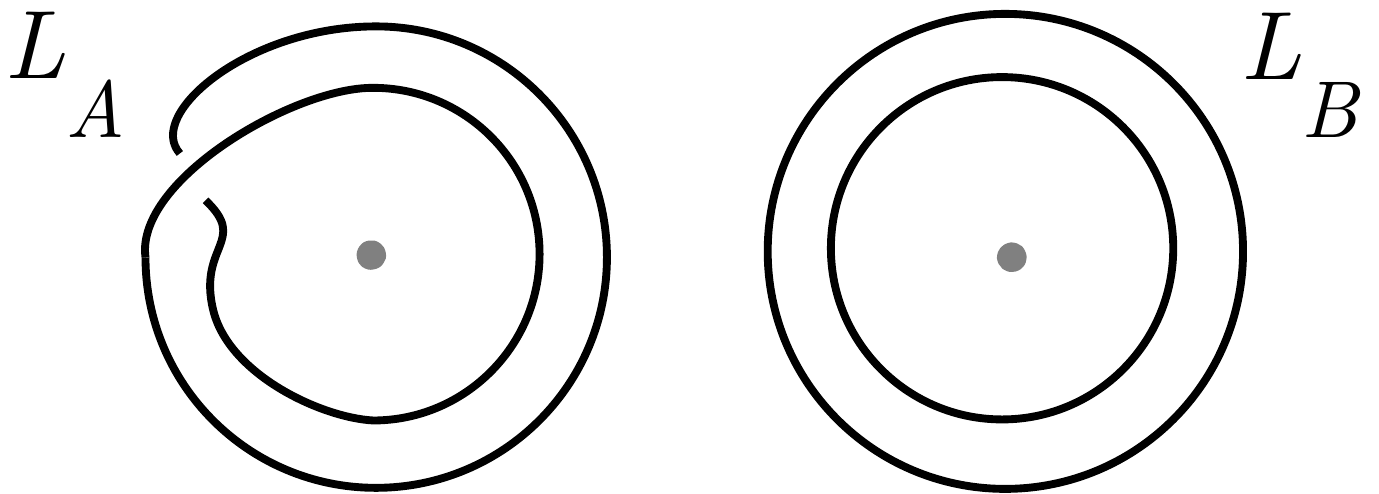}
\caption[legenda elenco figure]{Punctured disk diagrams for $L_A$ and $L_B$ in $L(4,1)$.}\label{CE2band}
\end{center}
\end{figure}
It holds that $\textrm{KBSM} (L_{A})=Ax_{2}+A^{-1}x_{0}$ and $\textrm{KBSM} (L_{B})=x_{2}$.
\end{ese}

\begin{ese}
Consider the two links $A_{2,2}$ and $B_{2,2}$ in $L(4,1)$, represented by the punctured disk diagrams illustrated in Figure~\ref{CE3band}. Their punctured disk diagram are found according to Proposition~\ref{DB}, starting from \cite[Example 11]{Mn2}. The links are not isotopic as they have different Alexander polynomials, but they have equivalent lifts.
\begin{figure}[h!]       
\center                         
\includegraphics[width=10cm]{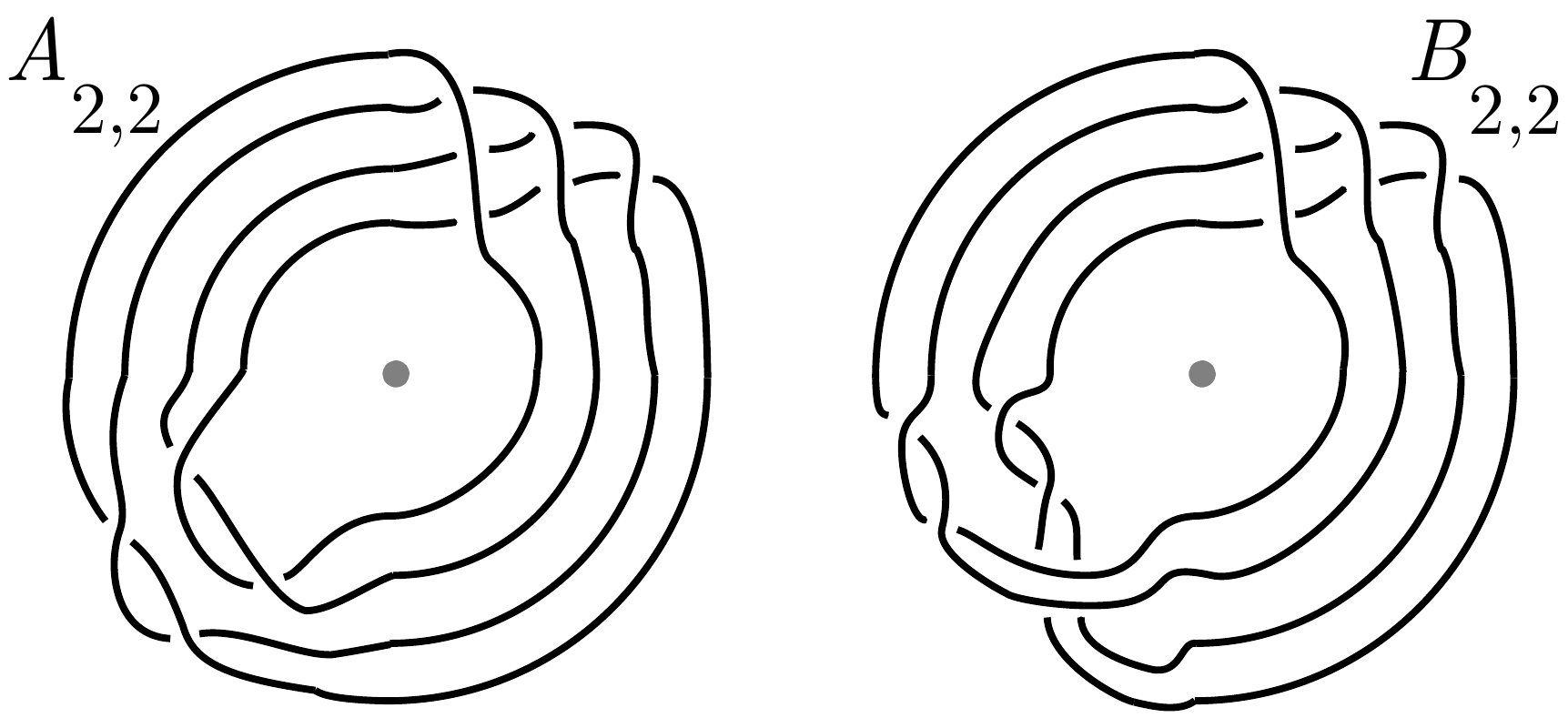}
\caption[legenda elenco figure]{Punctured disk diagrams for $A_{2,2}$ and $B_{2,2}$ in $L(4,1)$.}\label{CE3band}
\end{figure}

Their KBSM-s are again different:
\begin{multline*}
\textrm{KBSM}  
(A_{2,2})= (3 A^{14} + A^{10} -2A^{8}-A^{6}-1) x_{2} + \\
+ ( -A^{20} + A^{12} -A^{6} -A^{-2} +A^{-4} -A^{-8} +A^{-12}-A^{-16}) x_0 
\end{multline*}
\begin{multline*}
\textrm{KBSM}  (B_{2,2})
=  (3 A^{12} + A^{8} -2A^{6}+2A^{2}+2+4A^{-2}+2A^{-4}) x_{2} + \\
+ ( -A^{18} + A^{10} -A^{4} +1 +3A^{-4} -A^{-6} +5 A^{-8} -3A^{-10}+4A^{-12}-3A^{-14} -A^{-18}) x_0. 
\end{multline*}

\end{ese}


\section{KBSM vs KB of the lift}

In this section we show a relation between the KBSM of a link $L$ in $L(p,q)$ and the Kauffman bracket of the lift.


Let $B$ be a punctured disk diagram of $L$ (or equivalently, a band diagram, as Section~\ref{bandsection} shows).
We can consider the punctured disk diagram as a diagram of $L$ in the solid torus, and compute the KBSM of $L$ inside the solid torus according to Proposition~\ref{solidtorus}. 
Let $\delta=-A^{-2}-A^{2}$. A substitution of $x_0$ with $1$ and $x_i$, $i>0$, with $\delta^{i-1}$ yields the usual Kauffman bracket of links in $\s3=L(1,0)$ which we denote by $\langle L \rangle$.


Let $t$ denote half of the number of boundary points of $B$. Following Remark~\ref{Chbili} we can describe $B$ as a $t$-tangle, and we can describe the lift of the link as a sum of $t$-tangles: $B_{\phantom{t}}^{p}\Delta_{t}^{2q}$. The closure of a $t$-tangle $\widehat{B}$ is a link in $\s3$ and it coincides with the link in $\s3=L(1,0)$ represented by the band diagram $B$. In the case where the winding number $w(L)$ of the link in the lens space is lower than $\lfloor p/2 \rfloor$, the statement is true also for the KBSM of the link in the lens space.
Let $Id_{t}$ denote the trivial braid on $t$ strands, or equivalently, the trivial $t$-tangle. The link $\widehat{Id_{k}}$ is the unlink with $k$ components.

\begin{teo}
Let $L$ be a link in $L(p,q)$ described by the band diagram $B$. Then 
\begin{equation}\label{KBSMKB}
\langle \widehat{B} \rangle^{p} \equiv \langle \widehat{B_{\phantom{t}}^{p}\Delta_{t}^{2q}} \rangle \bmod I,
\end{equation}
where $I$ is the ideal generated by $p$, $\delta^{p-1}-1$ and the family $ \{ \langle \widehat{Id_{t-2i}} \rangle^{p} - \langle \widehat{\Delta_{t-2i}^{2q}} \rangle  \}_{i=0, \ldots, \lfloor (t-1)/2 \rfloor }$.
\end{teo}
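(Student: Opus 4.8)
The plan is to compute both sides of \eqref{KBSMKB} by expanding the band diagram $B$ into the free basis $\{x_i\}_{i\in\mathbb{N}}$ of the KBSM of the solid torus (Proposition~\ref{solidtorus}), and then track what happens to this expansion under the two operations that produce, respectively, the $p$-th power of the Kauffman bracket of $\widehat B$ and the Kauffman bracket of the lift $\widehat{B^{p}\Delta_t^{2q}}$. First I would write $B = \sum_{i=0}^{t} c_i(A)\, x_i$ in the skein module of the solid torus, where the sum stops at $i=t$ because the band diagram has only $t$ boundary points on each side (so at most $t$ parallel essential strands survive after resolving all crossings). Closing up $B$ inside $S^3$ replaces $x_0$ by $1$ and $x_i$ by $\delta^{i-1}$ for $i>0$, so $\langle\widehat B\rangle = c_0 + \sum_{i\ge 1} c_i \delta^{i-1}$; raising this to the $p$-th power is the left-hand side.

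\textbf{Key steps.} For the right-hand side, the central observation is that juxtaposing $p$ copies of $B$ inside the solid torus and then closing with $\Delta_t^{2q}$ is a $\mathbb{Z}[A^{\pm1}]$-linear operation in each tensor factor, so I can expand $B^{p}\Delta_t^{2q}$ multilinearly: $\langle\widehat{B^{p}\Delta_t^{2q}}\rangle = \sum_{(i_1,\dots,i_p)} c_{i_1}\cdots c_{i_p}\,\langle\widehat{(x_{i_1}\cdots x_{i_p})\Delta_t^{2q}}\rangle$. The next step is to understand the diagonal and off-diagonal contributions separately. Modulo $p$, the multinomial coefficients collapse so that the "mixed" terms where not all $i_j$ are equal come in groups of size divisible by $p$ (this is the standard Frobenius/freshman's-dream phenomenon, $(\sum a_i)^p \equiv \sum a_i^p \bmod p$), hence vanish mod $I$ provided the tangle $\langle\widehat{(x_{i_1}\cdots x_{i_p})\Delta_t^{2q}}\rangle$ is invariant under cyclically permuting the indices — which it is, since $\Delta_t^{2q}$ is central in $B_t$ and the closure identifies cyclic rotations. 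So modulo $p$ only the diagonal terms $\sum_i c_i^p\,\langle\widehat{x_i^{p}\Delta_t^{2q}}\rangle$ remain. Then I would compute $\langle\widehat{x_i^{p}\Delta_t^{2q}}\rangle$: $p$ parallel copies of the $i$-th toric generator, closed with the $2q$-fold full twist on $t$ strands, is isotopic in $S^3$ to a $(pi, \cdot)$-torus-type link together with $\lfloor t/2\rfloor - \lfloor i/2 \rfloor$-ish extra trivial strands carried by the twist; comparing it with $\widehat{\Delta_{t-2i}^{2q}}$ times the appropriate power of $\delta$, and with $\langle\widehat{Id_{t-2i}}\rangle^p$ times $\delta^{\,p(i-1)}$ or $1$, is exactly what the generators $\delta^{p-1}-1$ and $\langle\widehat{Id_{t-2i}}\rangle^p - \langle\widehat{\Delta_{t-2i}^{2q}}\rangle$ of $I$ are there to absorb. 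Using $\delta^{p-1}\equiv 1$ repeatedly, $c_i^p\delta^{\,p(i-1)} \equiv c_i^p\delta^{\,i-1}\cdot(\delta^{p-1})^{\,i-1}\cdot\delta^{\,?}\equiv c_i^p\delta^{\,i-1}$ up to bookkeeping, and then reassembling $\sum_i c_i^p \delta^{i-1} \equiv (\sum_i c_i\delta^{i-1})^p \equiv \langle\widehat B\rangle^p \bmod p$ closes the loop.

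\textbf{Main obstacle.} I expect the technical heart — and the step most likely to hide a subtlety — to be the explicit identification of $\langle\widehat{x_i^{p}\Delta_t^{2q}}\rangle$ with an expression built from $\langle\widehat{\Delta_{t-2i}^{2q}}\rangle$, $\delta$, and a controlled power of $A$, i.e.\ correctly booking how many strands of the $2q$-twist are "absorbed" by the $p$ parallel essential copies and how many remain as the $t-2i$ inessential strands that contribute the $\widehat{\Delta_{t-2i}^{2q}}$ factor. One must be careful that the framing normalization $L^{(1)}+A^3L$ does not introduce a stray invertible monomial that fails to lie in $I$; since $I$ contains no units, every discrepancy has to be genuinely accounted for, not merely "up to a power of $A$." A clean way to handle this is to first prove the identity for the solid-torus basis elements $x_i$ directly (where $B = x_i$, $c_i = 1$, all other $c_j=0$), check that \eqref{KBSMKB} reduces there to $\delta^{p(i-1)} \equiv \delta^{i-1} \bmod I$ together with $\langle\widehat{x_i^p\Delta_t^{2q}}\rangle \equiv \langle\widehat{Id_{t-2i}}\rangle^p\,\delta^{p(i-1)}$, and only then bootstrap to general $B$ by multilinearity and the mod-$p$ Frobenius collapse described above. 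The case $i=0$ (the inessential generator $x_0$) should be treated separately since its closure contributes a $1$ rather than a $\delta$-power, and this is precisely the $i=0$ term in the family generating $I$.
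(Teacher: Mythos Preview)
Your proposal is correct and is essentially the argument the paper invokes: the paper's own proof consists solely of the sentence ``analogous to the main theorem of \cite{C2}, modifying the function $f$,'' and what you have written is precisely a reconstruction of Chbili's proof---expand the tangle in a basis (for Chbili, the Temperley--Lieb generators; for you, the solid-torus generators $x_i$, which amounts to the same thing after closure), use cyclic invariance of the trace together with the Frobenius collapse $(\sum a_i)^p\equiv\sum a_i^p\bmod p$ to kill the mixed terms, and absorb the remaining diagonal discrepancies into exactly the generators of $I$. Your identification of the ``main obstacle'' (matching $\langle\widehat{x_i^{\,p}\Delta_t^{2q}}\rangle$ against $\langle\widehat{\Delta_{t-2i}^{2q}}\rangle$ and powers of $\delta$ without stray units) is the only place real work is needed, and it is handled in \cite{C2} by the direct computation you suggest for the basis elements.
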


Notice that, if $\delta=(-A^{2}-A^{-2})$, then $\langle \widehat{Id_{t-2i}} \rangle^{p}=\delta^{p(t-2i-1)}$.

\begin{proof}
The proof is analogous to the proof of the main theorem of \cite{C2}. By modifying the definition of the function $f$ and by verifying that it satisfies the same properties of the original one, we get our statement.
\end{proof}

\begin{ese}
Considering the two links $L_A$ and $L_B$ in $L(4,1)$ of Example~\ref{es2KBSM}, we can verify Formula~\ref{KBSMKB}. The lift of these two links is the Hopf link, that has Kauffman bracket equal to $-A^{4}-A^{-4}$.
Since $t=2$, the ideal $I$ is generated by $p$, $\delta^{3}-1$ and $ \{ \langle \widehat{Id_{2-2i}} \rangle^{4} - \langle \widehat{\Delta_{2-2i}^{2}} \rangle  \}_{i=0}$, that is to say, $p$, $-A^{6} + A^{2} - 1 + A^{-2} - A^{-6}$ and $\langle \widehat{Id_{2}} \rangle^{4} - \langle \widehat{\Delta_{2}^{2}} \rangle = \delta^{4} - (-A^{4}-A^{-4})= A^{8}+5A^{4}+6+5A^{-4}+A^{-8}$. A Groebner basis of $I$ is given by $\{4, A^{2} + 1 + A^{-2} \}$.

The closure in $\s3$ of the band diagram of $L_A$ is the trivial knot, with $\langle L_A \rangle = -A^{-3}$, while the same operation on the band diagram of $L_B$ produces $\langle L_B \rangle = (-A^{2}-A^{-2})$. 


For the knot $L_A$, Formula~\ref{KBSMKB} turns into
$$
(-A^{3})^{4} \equiv ( -A^{4}-A^{-4} ) \bmod I.
$$

For the link $L_B$, Formula~\ref{KBSMKB} turns into 

$$
(-A^2-A^{-2})^{4}= (A^{8}+4A^{4}+6+4A^{-4}+A^{-8})\equiv ( -A^{4}-A^{-4} ) \bmod I .
$$

\end{ese}


\vspace{2mm}
\textit{Acknowledgments:} This research has been carried on at Universities of Bologna and Ljubljana. The authors are grateful to Matja Cencelj and Michele Mulazzani for promoting the collaboration.


\vspace{15 pt} {BO\v{S}TJAN GABROV\v{S}EK, FME, University of Ljubljana, SLOVENIA. E-mail: bostjan.gabrovsek@fs.uni-lj.si}

\vspace{15 pt} {ENRICO MANFREDI, Department of Mathematics,
University of Bologna, ITALY. E-mail: enrico.manfredi3@unibo.it}



\begin{thebibliography}{DGKT}
\rhead[\fancyplain{}{\bfseries \leftmark}]{\fancyplain{}{\bfseries \thepage}}








\bibitem[BM]{BM} D. Buck, M. Mauricio, \textit{Connect sum of lens spaces surgeries: application to Hin recombination}, Math. Proc. Cambridge Philos. Soc. \textbf{150} (2011), 505--525. 


\bibitem[CMM]{CMM} A. Cattabriga, E. Manfredi, M. Mulazzani, \textit{On knots and links in lens spaces}, Topology Appl. \textbf{160} (2013), 430--442.



\bibitem[C]{C2} N. Chbili, \textit{The Jones polynomials of freely periodic knots}, J. Knot Theory Ramifications \textbf{9} (2000), 885--891.


\bibitem[Ch4]{C4} N. Chbili, \textit{A new criterion for knots with free periods}, Ann. Fac. Sci. Toulouse Math. \textbf{12} (2003), 465--477.



\bibitem[DL]{DL} I. Diamantis, S. Lambropoulou, \emph{Braid equivalence in 3-manifolds with rational surgery description}, arXiv:1311.2465 [math.GT] (2013).

\bibitem[D]{Dr} Y. V. Drobotukhina, \emph{An analogue of the Jones polynomial for links in $\mathbb{R}P^{3}$ and a generalization of the Kauffman-Murasugi theorem}, Leningrad Math. J. \textbf{2} (1991), 613--630.



\bibitem[Ga]{Gb} B. Gabrov\v{s}ek, \textit{Classification of knots in lens spaces}, Ph.D. thesis, University of Ljubljana, Slovenia, 2013.


\bibitem[GM]{GM1} B. Gabrov\v{s}ek, M. Mroczkowski, \textit{Knots in the solid torus up to 6 crossings}, J. Knot Theory Ramifications \textbf{21} (2012), 1250106, 43 pp. 





\bibitem[HP]{HP} J. Hoste, J. H. Przytycki, \textit{The $(2,\infty)$-skein module of lens spaces; a generalization of the Jones polynomial}, J. Knot Theory Ramifications \textbf{2} (1993), 321--333. 







\bibitem[M1]{Mn} E. Manfredi, \textit{Knots and links in lens spaces}, Ph.D. Thesis, University of Bologna, 2014.

\bibitem[M2]{Mn2} E. Manfredi, \textit{Lift in the $3$-sphere of knots and links in lens spaces},  J. Knot Theory Ramifications \textbf{23} (2014), 1450022, 21 pp.


 


\bibitem[P]{P} J. H. Przytycki, \textit{Skein modules of 3-manifolds}, Bull. Polish Acad. Sci. Math. \textbf{39} (1991), 91--100. 




\bibitem[S]{Ste} S. Stevan, \textit{Torus Knots in Lens Spaces \& Topological Strings}, to appear on Ann. Henri Poincar\'e.



\bibitem[T]{Tu1} V. G. Turaev, \textit{The Conway and Kauffman modules of a solid torus}, J. Soviet Math. \textbf{52} (1990), 2799--2805. 



\end{thebibliography}
\end{document}